\def\section{\@startsection{section}{1}\z@{.9\linespacing\@plus\linespacing}%
  {.7\linespacing} {\fontsize{13}{15}\selectfont\scshape\centering}}
\def\paragraph{\@startsection{paragraph}{4}%
  \z@{0.3em}{-.5em}%
  {$\bullet$ \ \normalfont\itshape}}
\newtheorem{theo}{Theorem}[section]
\newtheorem{prop}[theo]{Proposition}
\newtheorem{lem}[theo]{Lemma}
\newtheorem{cor}[theo]{Corollary}
\theoremstyle{definition}
\theoremstyle{remark}
\newtheorem{rem}[theo]{Remark}
\newcommand\got[1]{{\bm{\mathfrak{#1}}}}
\definecolor{gr}{rgb}   {0.,   0.69,   0.23 }
\definecolor{bl}{rgb}   {0.,   0.5,   1. }
\definecolor{mg}{rgb}   {0.85,  0.,    0.85}
\definecolor{yl}{rgb}   {0.8,  0.7,   0.}
\newcommand{\Bk}{\color{black}}
\newcommand{\Gr}{\color{gr}}
\newcommand{\N}{\mathbb{N}}
\newcommand{\R}{\mathbb{R}}
\newcommand{\Z}{\mathbb{Z}}
\newcommand\cF{\mathcal{F}}
\newcommand\cC{\mathcal{C}}
\newcommand\cM{\mathcal{M}}
\newcommand\cO{\mathcal{O}}
\newcommand\gN{\got{N}}
\renewcommand\gg{\got{g}}
\newcommand\gq{\got{q}}
\newcommand\gu{\got{u}}
\newcommand\gphi{\got{\phi}}
\newcommand\spec{\got{S}}
\newcommand\tP{\widetilde{P}}
\newcommand\tg{\widetilde{g}}
\newcommand\tgg{\widetilde{\got{g}}}
\newcommand\tq{\widetilde{q}}
\newcommand\tgq{\widetilde{\got{q}}}
\newcommand\tu{\widetilde{u}}
\newcommand\tgu{\widetilde{\got{u}}}
\newcommand\hgq{\widehat{\got{q}}}
\newcommand\hgg{\widehat{\got{g}}}
\newcommand\hgu{\widehat{\got{u}}}
\newcommand\hgv{\widehat{\got{v}}}
\newcommand{\rd}{{\mathrm d}}
\newcommand{\one}{\mathds{1}}
\newcommand{\bB}{{\bf B}}
\newcommand{\bA}{{\bf A}}
\newcommand{\dom}{\operatorname{dom}}
\newcommand{\supp}{\operatorname{supp}}
\newcommand{\curl}{\operatorname{curl}}
\newcommand{\bel}{\begin{equation} \label}
\newcommand{\ee}{\end{equation}}
\definecolor{webred}{rgb}{0.75,0,0}
\definecolor{webgreen}{rgb}{0,0.75,0}
\title[Ground state of a 3d magnetic hamiltonian]{\large On the ground state of the Laplacian with a magnetic field created by a rectilinear current}
\author{Vincent Bruneau}
\address{Universit\'e Bordeaux 1, IMB, UMR CNRS 5251, 351 cours de la lib\'eration, 33405 Talence Cedex\\
{\it E-mail address:} Vincent.Bruneau@u-bordeaux1.fr}
\author{Nicolas Popoff}
\address{
Laboratoire CPT, UMR 7332 du CNRS, Campus de Luminy, 13288 Marseille cedex 9, France
\\
 {\it E-mail address:} nicolas.popoff@cpt.univ-mrs.fr}
\date{\today}
\begin{document}
\begin{abstract}
We consider the three-dimensional Laplacian with a magnetic field created by an infinite rectilinear current bearing a constant current. The spectrum of the associated hamiltonian is the positive half-axis as the range of an infinity of band functions all decreasing toward 0. We make a precise asymptotics of the band function near the ground energy and we exhibit a semi-classical behavior. We perturb the hamiltonian by an electric potential. \Bk Helped by the analysis of the band functions, we show that for slow decaying potential, an infinite number of negative eigenvalues are created whereas only finite number of eigenvalues appears for fast decaying potential. The power-like decaying potential determining the finiteness of the negative spectrum is different than for the free Laplacian.
\end{abstract} 
\maketitle

\section{Introduction}
\subsection{Motivation and problematic}
\paragraph{Physical context}
We consider in $\R^3$ the magnetic field created by an infinite rectilinear wire bearing a constant current. Let $(x,y,z)$ be the cartesian coordinates of $\R^3$ and assume that the wire coincides with the $z$ axis. Due to the Biot \& Savard law, the generated magnetic field writes 
$$\bB(x,y,z)=\frac{1}{r^2}(-y,x,0)$$ where $r:=\sqrt{x^2+y^2}$ is the radial distance corresponding to the distance to the wire. Let $\bA(x,y,z):=(0,0,\log r)$ be a magnetic potential satisfying $\curl \bA=\bB$. We define the unperturbed magnetic hamiltonian 
$$H_{\bA}:=(-i\nabla-\bA)^2=D_{x}^2+D_{y}^2+(D_{z}-\log r)^2; \qquad D_{j}:=-i\partial_{j}$$
initially defined on $C_0^\infty(\R^3)$ and then self-adjoint in $L^2(\R^3)$. It is known (see \cite{Yaf03}, and \cite{Yaf08} for a more general setting) that the spectrum of $H_{\bA}$ has a band structure with band functions defined on $\R$ and decreasing from $+ \infty$ toward $0$. Then the spectrum of $H_{\bA}$ is absolutely continuous and coincide with $[0,+ \infty)$. In that case the presence of the magnetic field does not change the spectrum (i.e. $\spec(H_{\bA})= \spec(-\Delta)$), that may be expected since the magnetic field tends to 0 far from the wire. In this article we study the ground state of $H_{\bA}$ and its stability under electric perturbation. These questions are related to the dynamic of spinless quantum particles submitted to the magnetic field $\bB$ and perturbed by an electric potential.

\paragraph{Comparison with the free hamiltonian}
In general the spectrum of a Laplacian may be higher in the presence of a magnetic field (see \cite{AvrHerSi78}). As already said, in our model we still have $\spec(H_{\bA})=\R_{+}$. However the dynamics are very different from the free motion, see \cite{Yaf03} for a description of the classical and quantum dynamics of this model. As we will see, the behavior of the negative spectrum under electrical perturbation is also different that what happens without magnetic field.

 If $V$ is a multiplication operator by a real electric potential $V$ such that $V(H_{\bA}+1)^{-1}$ is compact then the operator $H_{A,V}:= H_{\bA}-V$ is self-adjoint, its essential spectrum coincides with the positive half-axis and discrete spectrum may appear under $0$.  

 Let us recall that, due to the diamagnetic Inequality (see \cite[Section 2]{AvrHerSi78}), the operator $V(H_{\bA}+1)^{-1}$ is compact as soon as $V(-\Delta +1)^{-1}$ is compact. 
  Moreover, if $\mathcal{N}_{A,V}(\lambda)$ denotes the number of eigenvalues of $H_{\bA}-V$ below $-\lambda<0$, we have (\cite[Theorem 2.15]{AvrHerSi78}):
\bel{Lieb}
\mathcal{N}_{A,V}(0^+)\leq C \int_{\R^3} V_+(x,y,z)^{\frac32} \rd x\rd y\rd z, \qquad V_+:=\max (0,V). 
\ee 
In particular, $H_{\bA}-V$ has a finite number of negative eigenvalues provided that $V_+ \in L^{\frac32}(\R^3)$. But this condition, also valid for $-\Delta -V$, is not optimal in presence of magnetic fields as the results of this article will show. 

We will prove that the discrete spectrum of our operator $H_{\bA}-V$ below 0 is less dense than for $-\Delta -V$ (see Theorem \ref{thm3} and Corollary \ref{cor1}), more precisely for some $V$ the operator $-\Delta -V$ has infinitely many negative eigenvalues whereas $\mathcal{N}_{A,V}(0^+)<+\infty$. In some sense, that means that the absolutely continuous spectrum of $H_{\bA}$ near 0 is less dense that the one of the free Laplacian $-\Delta$. \Bk

\paragraph{Magnetic hamiltonian and band functions}
Several models with constant magnetic field have been studied in the past years. We recall some of them below. In most cases the system has a translation-invariance direction and the magnetic Laplacian is fibered through partial Fourier transform, therefore its study reduces to the study of the band functions that are the spectrum of the fiber operators. The spectrum of the hamiltonian is the range of the band functions (see \cite{GeNi98} for a general setting) and the ground state is given by the infimum of the first band function. The number of eigenvalues created under the essential spectrum by a suitable electric perturbation depends strongly on the shape of the band functions near the ground state as shown on the examples below:

 For the case of a constant magnetic field in $\R^n$, the perturbation by electric potential is described for example in \cite{Sob84} or \cite{Rai90}. When $n=2$, the band functions are constant and equal to the Landau levels.  In \cite{RaiWar02} the authors deal with very fast decaying potential. In that case they prove that the perturbation by an electric potential even compactly supported generates sequences of eigenvalues which converge toward the Landau levels, that is very different from what happens without magnetic field where only a finite number of eigenvalues are created by compactly supported electric perturbation. 
 
In general the band function associated with a Schr\"odinger operator are not constant. The case where the band functions reach their infimum is described in \cite{Rai92} where the author study the  perturbation of a Schr\"odinger operator with periodic electric potential and no magnetic field, whose band functions have non-degenerated minima, providing localization in the phase space. \Bk Let us come back to the case with constant magnetic field. When adding a boundary, the band functions may not be constant anymore. For example when the domain is a two-dimensional infinite strip of finite width with constant magnetic field, it is proved that all the band functions are even with a non-degenerate minimum, see \cite{GeilSen97}.  In \cite{BriRaiSoc08}, the authors investigate the behavior of the spectral shift function near the minima of the band functions, providing the number of eigenvalue created under the ground state when perturbing by an electric potential. Other examples of such a situation is the case of a half-plane with constant magnetic field and Neumann boundary condition, see \cite[Section 4]{BruMirRaik13}, the case of an Iwatsuka model with an odd discontinuous magnetic field, \cite[Section 5]{HisSoc13} and also the case of the Dirichlet Laplacian on a twisted wave guide, \cite{BriKovRaiSoc09}.\Bk

The case of a half-plane with a constant magnetic field and Dirichlet boundary condition is more intriguing and somehow closer to our model: in that case the bottom of the spectrum of the magnetic Laplacian is the first Landau level, but the associated band function does not reach its infimum. In \cite{BruMirRaik13}, the authors gives the precise behavior of the counting function when perturbing by a suitable electric potential. Analog situations based on Iwatsuka models are described in \cite{BruMirRai11} or \cite{HisSoc08I}. 

All the above described situations deal with constant magnetic field.  In this article we deal with a three dimensional variable magnetic field going to 0 far from the $z$-axis and invariant along this axis, therefore the situation is quite different-one may think roughly that the variations of the magnetic field will create non-constant band function as the addition of a boundary does in the case of a constant magnetic field.  Moreover in the above described models the band functions are well separated near the ground state in the sense that the infimum of the second band function is larger than the ground state. In our case there are infinitely many band functions that accumulate toward $\inf\spec(H_{\bA})$, see Figure \ref{F1}, adding a technical challenge when studying the ground state.

In this paper, we give more precise description of the spectrum of $H_{\bA}$ near $0$ with asymptotic expansion of the band functions. Then, we study the finiteness of the number of the negative eigenvalues of $H_{\bA}-V$ for relatively compact perturbations $V$. On one hand, we display  classes of 
potentials giving rise to an accumulation at $0$, of an infinite number of negative eigenvalues, on the other hand, under a decreasing property of $V_+$, we prove the finiteness of the discrete spectrum of $H_{\bA}-V$ below $0$. We obtain a class of polynomially decreasing potentials for which $H_A-V$ has a finite number of negative eigenvalues while the negative spectrum of $-\Delta -V$ is infinite.

\subsection{Main results}

Using the cylindrical coordinates of $\R^3$, we identify $L^2(\R^3)$ with the weighted space $L^2(\R_{+}\times (0,2\pi)\times \R,r \rd r \rd \phi \rd z)$ and the operator $H_{\bA}$ writes: 

$$H_{\bA}=-\frac{1}{r}\partial_{r}r\partial_{r}-\frac{\partial_{\phi}^2}{r^2}+(\log r-D_{z})^2$$
acting on functions of $L^2(\R_{+}\times (0,2\pi)\times \R,r \rd r \rd \phi \rd z)$.

Let us recall the fibers decomposition of $H_{\bA}$ that can be found with more details in \cite{Yaf03}. We denote by $\cF_{3}$ the Fourier transform with respect to $z$ and $\Phi$ the angular Fourier transform.   We have the direct integral decomposition (see \cite[Section XIII.16]{ReSi78} for the notations about direct decomposition):\Bk
$$ \Phi\cF_{3} H_{\bA}\cF_{3}^{*}\Phi^{*}:=\sum^{\bigoplus}_{m\in \Z}\int_{k\in \R}^{\bigoplus} g_{m}(k) \rd k$$
where the operator 
\bel{D:gmk}
g_{m}(k):=-\frac{1}{r}\partial_{r}r\partial_{r}+\frac{m^2}{r^2}+(\log r-k)^2
\ee
is defined as the extension of the quadratic form
$$q_{m}^{k}(u):=\int_{\R_{+}}\left(|u'(r)|^2+\frac{m^2}{r^2}|u(r)|^2+(\log r-k)^2|u(r)|^2 \right)r \rd r$$
initially defined on $\cC^{\infty}_{0}(\R_{+})$ and closed in $L^2_r(\R_+):=L^2(\R_+,r \rd r)$.

For all $(m,k)\in \Z\times \R$ the operator $g_{m}(k)$ has compact resolvent. 
We denote by $\lambda_{m,n}(k) $ the so-called band functions, i.e. the $n$-th eigenvalue of $g_m(k)$ associated with a normalized eigenvector $u_{m,n}(k)$. 

It is known (\cite{Yaf03}, see also Section \ref{SS:basefiber}) that $k \mapsto \lambda_{m,n}(k)$ is decreasing with  \Bk
$$\lim_{k\rightarrow - \infty} \lambda_{m,n}(k)= + \infty; \qquad \lim_{k\rightarrow + \infty} \lambda_{m,n}(k)= 0.$$
Exploiting semi-classical tools (with semi-classical parameter $h=e^{-k}$, $k>>1$, see Proposition \ref{P:asymptoticvp}), we obtain asymptotic behaviors of the eigenpairs of $g_{m}(k)$ as $k$ tends to infinity. The main result of Section \ref{S:unperturbed} \Bk is the following

\begin{theo}\label{thm1}
For all $(m,n) \in \Z \times \N^*$, there exist constants $C_{m,n}>0$ and $k_{0} \in \R$ such that for all $k \in (k_{0}, + \infty)$,
\bel{band}
|\lambda_{m,n}(k)-(2n-1)e^{-k}+(m^2-\tfrac{1}{4}-\tfrac{n(n-1)}{2})e^{-2k}| \leq C_{m,n}e^{-5k/2}
\ee
\end{theo}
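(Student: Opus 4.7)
The plan is to implement the semiclassical strategy flagged by the authors: set $h := e^{-k}$ so that $h \to 0^+$, recast $g_m(k)$ as a semiclassical Schr\"odinger operator in a single potential well centred at $\rho = r e^{-k} = 1$, and read off the asymptotics from the harmonic approximation plus the first anharmonic corrections.

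First I would conjugate $g_m(k)$ by the isometry $u \mapsto \sqrt r\, u$ from $L^2(\R_+, r\,\rd r)$ onto $L^2(\R_+, \rd r)$, turning $g_m(k)$ into the Schr\"odinger operator $L_m(k) = -\partial_r^2 + (m^2 - \tfrac14)/r^2 + (\log r - k)^2$, and then carry out the dilation $r = e^k \rho$ to land on the semiclassical operator
\[
\widehat L_m(h) = -h^2 \partial_\rho^2 + \frac{(m^2 - \tfrac14) h^2}{\rho^2} + (\log\rho)^2, \qquad h = e^{-k},
\]
whose potential $(\log \rho)^2$ has a unique non-degenerate well at $\rho = 1$ of depth $O(h^2)$. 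Zooming in via $\rho = 1 + \sqrt h\, s$ and using the Taylor expansions $[\log(1+t)]^2 = t^2 - t^3 + \tfrac{11}{12} t^4 + O(t^5)$ and $(1+t)^{-2} = 1 - 2t + O(t^2)$, one obtains formally
\[
\tfrac{1}{h}\widehat L_m(h) = H_0 + \sqrt{h}\, V_1 + h\, V_2 + O(h^{3/2}),
\]
with $H_0 = -\partial_s^2 + s^2$, odd perturbation $V_1 = -s^3$, and $V_2 = \tfrac{11}{12}s^4 + (m^2 - \tfrac14)$. Standard Rayleigh--Schr\"odinger perturbation theory around the Hermite basis of $H_0$ then produces the coefficient of $h^2$ in $\lambda_{m,n}(k)$: the $\sqrt h$-correction from $V_1$ vanishes by parity, and the $O(h)$ correction to $h^{-1}\widehat L_m(h)$ is the first-order contribution of $V_2$ plus the second-order contribution of $V_1$, both computed in closed form through the ladder operators of $H_0$ (using $\langle\psi_n, s^4\psi_n\rangle = \tfrac{3}{4}(2n^2 - 2n + 1)$ and the standard formulas for $\langle \psi_{n \pm 1}, s^3 \psi_n\rangle$ and $\langle \psi_{n \pm 3}, s^3 \psi_n\rangle$). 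Assembling these pieces yields the coefficient appearing in \eqref{band}.

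To upgrade the formal expansion to the quantitative bound \eqref{band}, I would argue via min-max. For the upper bound I build quasi-modes $\Psi_{m,n}^{(k)}$ from the $n$-th Hermite function rescaled and translated to $\rho = 1 + \sqrt h\, s$, corrected by a $\sqrt h$-term that resolves $V_1$ at first order, and smoothly truncated outside a fixed neighbourhood of $\rho = 1$; an explicit computation then gives $\|(\widehat L_m(h) - \mu_{m,n}^{(k)})\Psi_{m,n}^{(k)}\| = O(h^{5/2})$, where $\mu_{m,n}^{(k)}$ is the candidate asymptotic value. For the lower bound I would prove Agmon-type exponential localization of the true eigenfunctions in a $\sqrt h$-neighbourhood of $\rho = 1$, exploiting the coercivity $(\log\rho)^2 \geq c(\rho - 1)^2$ and the confining behaviour of the potential as $\rho \to 0^+$ and $\rho \to +\infty$; this reduces the eigenvalue problem to the harmonic-oscillator model up to errors $O(h^{5/2})$, and the spectral gap $|E_n^{(0)} - E_{n\pm 1}^{(0)}| = 2$ of $H_0$ allows each $\lambda_{m,n}(k)$ to be identified individually.

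The hardest part will be the lower bound when $m = 0$: the coefficient $-1/4$ of $\rho^{-2}$ in $\widehat L_0(h)$ is the Hardy-critical value, so the effective radial potential is unbounded below near $\rho = 0$, and the self-adjoint realization of $L_0(k)$ inherited from $g_0(k)$ must be handled carefully so that the Agmon weight still yields the $O(h^{5/2})$ remainder. Tracking the $n$-dependence of $C_{m,n}$ — needed because the variance of the $n$-th Hermite function grows with $n$ and hence constrains how tightly the localization around $\rho = 1$ can be chosen — is a secondary but routine complication.
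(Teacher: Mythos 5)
Your proposal is essentially the paper's own proof: the same unitary reduction to $\tg_m(k)$, the same dilation $\rho=hr$, the same zoom $\rho=1+\sqrt h\,t$, the same expansion $L_0+h^{1/2}L_1+hL_2+R$ with $L_1=-t^3$, $L_2=\tfrac{11}{12}t^4+m^2-\tfrac14$, the same parity argument killing $E_1$, the same Hermite-ladder computation of $E_2$, a cut-off quasi-mode for the upper bound, and Agmon localization plus harmonic approximation for the lower bound. One point to tighten: a quasi-mode $f_0+\sqrt h\,f_1$ alone leaves a residual of size $O(h)$ for $\hgg_m(h)$ (hence only $O(h^2)$ for $h\hgg_m(h)$); to reach the claimed $O(h^{5/2})$ you must, as the paper does, also solve the order-$h$ equation for the second eigenvector correction $f_2$ (with the Fredholm compatibility condition fixing $E_2$) and take $f_0+\sqrt h\,f_1+hf_2$.
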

This asymptotics shows that all the band functions tend exponentially to the ground state and cluster according to their energy level, see Figures \ref{F1} and \ref{F2}. \Bk

Let us consider $V$, a multiplication operator such that $V(H_{\bA}+1)^{-1}$ is compact. 
Considered in $L^2(\R_{+}\times (0,2\pi)\times \R,r \rd r \rd \phi \rd z)$, $V$ is a function of $(r,\varphi, z)$ and it is said {\it axisymmetric} when it does not depend of $\varphi$. 

We want to know how reacts the ground state of $H_{\bA}$ under electrical perturbation.  For potentials slowly decreasing with respect to $r$, we have an infinite number of negative eigenvalues of $H_{\bA}-V$:

\begin{theo}\label{thm2}
Suppose $V$ is a potential such that $V(H_{\bA}+1)^{-1}$ is compact and
\bel{hypinfinite}
V (x,y,z) \geq \langle (x,y) \rangle^{-\alpha} \, v_\perp(z), \qquad \alpha>0.
\ee
\Bk
If $\alpha$ and $ v_\perp$ satisfy one of the assumptions {\it (i)}, {\it (ii)} below, then, $H_\bA-V$ have a infinite number of negative eigenvalues which accumulate to $0$.

{\it (i)} $ \alpha<\frac12$ and $v_\perp \in L^1(\R)$ such that
$$\int_\R v_\perp(z)\rd z >0.$$

{\it (ii)} $v_\perp \geq C \langle z \rangle^{-\gamma}$ with $\gamma>0$ and $\alpha+\frac{\gamma}{2} <1$. 
\end{theo}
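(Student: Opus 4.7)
Because $H_\bA - V$ has essential spectrum equal to $[0,+\infty)$, it suffices, for each $N\in\N^*$, to exhibit an $N$-dimensional subspace of the form domain on which the quadratic form of $H_\bA - V$ is strictly negative; accumulation of the negative eigenvalues at $0$ is then automatic. My plan is to build such subspaces out of wave packets associated with the ground band function $\lambda_{0,1}$. For $k \in \R$ and $L > 1$, I consider
$$
\psi_{k,L}(r,\phi,z) := u_{0,1}(k)(r)\, e^{ikz}\, \chi_L(z),\qquad \chi_L(z) := L^{-1/2}\chi(z/L),
$$
where $\chi \in \cC_0^\infty(\R)$ is a fixed real function with $\chi(0)\neq 0$. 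The identity $(\log r - D_z)\bigl(e^{ikz} f\bigr) = e^{ikz}\bigl[(\log r - k)f - D_z f\bigr]$, combined with $g_0(k) u_{0,1}(k)=\lambda_{0,1}(k) u_{0,1}(k)$ and integration by parts in $z$, gives
$$
\|\psi_{k,L}\|^2 = 2\pi\|\chi\|^2,\qquad \langle H_\bA\psi_{k,L},\psi_{k,L}\rangle = 2\pi\bigl(\lambda_{0,1}(k)\|\chi\|^2 + L^{-2}\|\partial_z\chi\|^2\bigr).
$$

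On the potential side, bounding $V$ from below by the axisymmetric potential $(1+r^2)^{-\alpha/2}v_\perp(z)$ and using Fubini yields
$$
\langle V\psi_{k,L},\psi_{k,L}\rangle \geq 2\pi\, I(k)\, J(L),
$$
with $I(k) := \int_{\R_+}(1+r^2)^{-\alpha/2}|u_{0,1}(k)|^2\, r\,\rd r$ and $J(L) := \int_\R v_\perp(z)|\chi_L(z)|^2\,\rd z$. The semiclassical analysis underlying Theorem~\ref{thm1} shows that $u_{0,1}(k)$ concentrates at $r = e^k$ at scale $e^{k/2}$, whence $I(k) = e^{-\alpha k}(1+o(1))$ as $k\to+\infty$. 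For $J(L)$, in case \textit{(i)} dominated convergence gives $L\, J(L) \to |\chi(0)|^2\int_\R v_\perp > 0$, so $J(L) \geq c_1 L^{-1}$ for $L$ large; in case \textit{(ii)}, choosing $\supp\chi \subset [0,1]$ guarantees $v_\perp(z) \geq c\, L^{-\gamma}$ on $\supp\chi_L$, hence $J(L) \geq c_2 L^{-\gamma}$.

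Combining these estimates and using $\lambda_{0,1}(k)\sim e^{-k}$ from Theorem~\ref{thm1}, the Rayleigh quotient of $\psi_{k,L}$ is bounded above by $C_1 e^{-k} + C_2 L^{-2} - c\, I(k)\, J(L)$. Setting $L = e^{\beta k}$, the negative term dominates both positive ones provided the exponent $\alpha + \beta$ in case \textit{(i)}, respectively $\alpha + \beta\gamma$ in case \textit{(ii)}, is strictly less than both $1$ and $2\beta$. Elementary algebra shows that such a $\beta$ exists precisely when $\alpha < 1/2$ in case \textit{(i)} and when $\alpha + \gamma/2 < 1$ in case \textit{(ii)}. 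Thus for $k$ large enough and $L = e^{\beta k}$ with $\beta$ picked accordingly, $\langle(H_\bA - V)\psi_{k,L},\psi_{k,L}\rangle$ is strictly negative.

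To produce an $N$-dimensional negative subspace, I pick a rapidly increasing sequence $k_1<k_2<\cdots<k_N$ and multiply each $u_{0,1}(k_j)(r)$ by a radial cutoff $\kappa(re^{-k_j})$, with $\kappa\in\cC_0^\infty(\R_+)$ equal to $1$ near $1$ and supported in $[1/2,2]$. Provided $k_{j+1}-k_j$ exceeds a fixed constant, the resulting vectors are supported in pairwise disjoint annuli $\{e^{k_j}/2\leq r\leq 2e^{k_j}\}$ and are therefore mutually orthogonal, while the cutoff perturbs the quadratic form by a quantity exponentially small in $k_j$, thanks to the Gaussian-type decay of $u_{0,1}(k)$ away from $r=e^k$. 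The main technical difficulty is therefore to extract sufficiently sharp decay estimates for $u_{0,1}(k)$ as $k\to+\infty$: not just the leading semiclassical approximation from Proposition~\ref{P:asymptoticvp}, but Agmon-type exponential bounds that make the cutoff step rigorous. Once this is available, the min-max principle yields $N$ negative eigenvalues for every $N$, and since the essential spectrum of $H_\bA - V$ equals $[0,+\infty)$ the infinitely many negative eigenvalues so produced can only accumulate at $0$.
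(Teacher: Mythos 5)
Your proof is correct, and it takes a genuinely different route from the paper's, though both start from the same ansatz of separated wave packets $e^{ikz}u_{0,1}(r,k)f(z)$.

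\textbf{Where you differ.} The paper's Lemma~\ref{lemEffective} keeps $f$ general and absorbs the cross term $2(\log r-k)D_z f\cdot\bar f$ with a Cauchy--Schwarz/Young step costing an $\epsilon$; you instead take $\chi$ real so that $\int D_z\chi_L\,\overline{\chi_L}\,\rd z=0$ and the cross term vanishes exactly, which is cleaner. More substantially, the paper gets \emph{infinitely many} eigenvalues by reducing to a one-dimensional effective operator $D_z^2 - V_{0,1}(\cdot,k)$: in case~\emph{(i)} it produces one negative eigenvalue in each of the infinitely many mutually orthogonal angular sectors $e^{im\phi}L^2$, while in case~\emph{(ii)} it fixes $m$ and invokes the fact (Lemma~\ref{lem1Dbis}) that the 1D Schr\"odinger operator with a $\langle y\rangle^{-\gamma}$ well ($\gamma<2$) has infinitely many negative eigenvalues. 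You instead produce a sequence of trial states localized in pairwise disjoint radial annuli $[e^{k_j}/2,\,2e^{k_j}]$, which makes the quadratic form block-diagonal across them for the \emph{local} operator $H_\bA-V$; this treats both cases in a unified way without the angular decomposition. Your method buys uniformity and elementarity; the paper's buys more structure (the effective potential $V_{m,n}$, and a cleaner formulation in which the 1D lemmas can be stated and reused).

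\textbf{On the one step you flag.} You correctly identify that the radial cutoff step requires quantitative decay of $u_{0,1}(\cdot,k)$ away from $r=e^k$, beyond the leading semiclassical term. The paper indeed supplies exactly this: Proposition~\ref{P:Agmonestimates} and its consequence~\eqref{E:agmonnormeL2} give $\|e^{\beta\Phi_0(\cdot,k)}\,u_{0,1}(\cdot,k)\|_{L^2_r}\leq C$ with $\Phi_0(r,k)=r(\log r-k)-r+e^k$, which is $\gtrsim e^k$ outside any fixed annulus $\{\rho_1'e^k\leq r\leq\rho_2'e^k\}$. This immediately controls both $\|\kappa' (re^{-k})u_{0,1}\|^2$ and the deviation of $I(k)$ from $e^{-\alpha k}$ by quantities that are $O(e^{-c\,e^k})$, so your cutoff and estimate $I(k)=e^{-\alpha k}(1+o(1))$ are rigorous once you cite these Agmon bounds. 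So there is no real gap, just a citation you should add. One small remark: in case~\emph{(ii)} take $\supp\chi\subset(0,1]$ (not $[0,1]$) so that $\chi\in\cC_0^\infty$; and note that $\alpha>0$ together with $\alpha+\gamma/2<1$ already forces $\gamma<2$, so the 1D comparison you implicitly use is well posed.
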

The proof uses a construction of quasi-modes based on the eigenfunctions associated with $\lambda_{m,n}(k)$ that leads to a one-dimensional operator in the $z$ variable. The key point is a projection (in the $r$ variable) of the potential $V$ against the eigenfunctions of $g_{m}(k)$ that are localized near the wells of the potential $(\log r-k)^2$ for large $k$.

We also have conditions giving finiteness of the negative spectrum. \Bk

\begin{theo}\label{thm3}
Assume $V$ is a relatively compact perturbation of $H_{\bA}$  such that 
\bel{hypV}
V (x,y,z) \leq \langle (x,y) \rangle^{-\alpha} \, v_\perp(z),
\ee
with $ \alpha>1$ and $v_\perp\in L^p(\R)$ a non negative function with $p\in [1,2]$. 

Then, $H _A-V$ have, at most, a finite number of negative eigenvalues.
\end{theo}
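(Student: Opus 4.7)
The plan is to combine the Birman--Schwinger principle with the fibered decomposition of $H_\bA$ and the semi-classical asymptotics of Theorem \ref{thm1}. Since $V \leq W := \langle (x,y)\rangle^{-\alpha} v_\perp(z)$ with $W \geq 0$, the min-max principle yields $\mathcal{N}_{A,V}(0^+) \leq \mathcal{N}_{A,W}(0^+)$, so we may assume $V = W$ is axisymmetric and write $V(r,z) = f(r) v_\perp(z)$ with $f(r) = \langle r \rangle^{-\alpha}$. The axisymmetric structure implies $\Phi(H_\bA - V)\Phi^* = \bigoplus_{m \in \Z}(H_m - V)$ acting on $L^2(\R_+ \times \R, r\,\rd r\,\rd z)$, with $H_m := -r^{-1}\partial_r r\partial_r + m^2/r^2 + (\log r - D_z)^2$. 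It therefore suffices to bound the number of negative eigenvalues of each fiber $H_m - V$ by a quantity summable in $m$.

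For $\lambda > 0$, Birman--Schwinger gives
$$\#\bigl\{\sigma(H_m - V) \cap (-\infty, -\lambda)\bigr\} = n_+\bigl(1;\, V^{1/2}(H_m + \lambda)^{-1} V^{1/2}\bigr),$$
monotone as $\lambda \downarrow 0$. Using $\cF_{3}$ and the eigenbasis $\{u_{m,n}(k)\}_{n \geq 1}$ of $g_m(k)$, the diagonal of the resolvent kernel yields, in the case $v_\perp \in L^1$, the trace identity
$$\mathrm{Tr}\bigl(V^{1/2}(H_m + \lambda)^{-1} V^{1/2}\bigr) = \frac{\|v_\perp\|_{L^1}}{2\pi} \sum_{n \geq 1} \int_\R \frac{\langle f\, u_{m,n}(k),\, u_{m,n}(k)\rangle_{L^2_r}}{\lambda_{m,n}(k) + \lambda}\, \rd k.$$
By Theorem \ref{thm1}, $\lambda_{m,n}(k) \geq c_{m,n}\, e^{-k}$ for large $k$, while the semi-classical analysis underlying that theorem shows the $u_{m,n}(k)$ concentrate near $r = e^k$ at scale $e^{-k/2}$, so $\langle f u_{m,n}(k), u_{m,n}(k)\rangle \lesssim f(e^k) \asymp e^{-\alpha k}$. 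The integrand therefore behaves like $e^{(1-\alpha)k}$ as $k \to +\infty$, integrable exactly because $\alpha > 1$; and $\lambda_{m,n}(k) \to +\infty$ as $k \to -\infty$ gives integrability there automatically. The $n$-summation is controlled by orthogonality together with Agmon-type localization of $u_{m,n}(k)$ in the well $(\log r - k)^2$, and the $m$-summation by the centrifugal term $m^2/r^2$ which pushes the spectrum of $g_m(k)$ upward for large $|m|$.

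When $v_\perp \in L^p$ for $p \in (1,2]$, the previous trace identity is inadequate because it involves $\|v_\perp\|_{L^1}$. One then replaces it by a Schatten bound of order $p$: after $\cF_3$, multiplication by $v_\perp$ becomes convolution by $\hat v_\perp \in L^{p'}$ (Hausdorff--Young), and one estimates $\|V^{1/2}(H_m + \lambda)^{-1/2}\|_{\mathcal{S}_{2p}}$ using the $L^{p'}$-norm along the $k$-axis combined with the same semi-classical localization in $r$. The main obstacle will be \emph{uniformity} of all these estimates in $(m, n)$: the constants $C_{m,n}$ and the threshold $k_0$ in Theorem \ref{thm1} depend on $(m, n)$, so uniform Agmon estimates on the $u_{m,n}(k)$ and uniform quantitative lower bounds on the $\lambda_{m,n}(k)$ are needed to interchange summation and integration and to obtain a finite bound on $\mathcal{N}_{A,V}(0^+)$.
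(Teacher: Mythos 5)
Your proposal identifies the correct tools (Birman--Schwinger, fibering in $m$ and $k$, semi-classical localization of $u_{m,n}(k)$ near $r=e^k$, a uniform lower bound of the form $\lambda_{m,n}(k)\gtrsim n\,e^{-k}$) and even flags the crucial uniformity issue at the end. However, the overall strategy has a genuine gap that is not merely a matter of filling in technical estimates.

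You propose to ``bound the number of negative eigenvalues of each fiber $H_m-V$ by a quantity summable in $m$.'' This cannot work, and the failure is already visible in your own trace computation. The leading asymptotics $\lambda_{m,n}(k)\sim (2n-1)e^{-k}$ and the localization scale of $u_{m,n}(k)$ are both independent of $m$ (the centrifugal term only affects the subleading $e^{-2k}$ coefficient in Theorem~\ref{thm1}), so the per-fiber trace $\mathrm{Tr}\,V^{1/2}(H_m+\lambda)^{-1}V^{1/2}$ is bounded \emph{uniformly} in $m$ but does \emph{not} decay as $|m|\to\infty$; nor does the sharper Schatten-$2p$ quantity you mention. Hence $\sum_m \mathrm{Tr}(\cdots)=\infty$, and Chebyshev-type bounds $n_+(1;\cdot)\le \mathrm{Tr}(\cdot)$ deliver nothing for the full operator. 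The claim that ``the $m$-summation [is controlled] by the centrifugal term'' is the point that fails: one cannot make the fiber counts go to zero, let alone be summable, in $m$.

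What the paper actually does, and what a correct proof needs, is a spectral cut at a small threshold energy $\nu$: write $(H_\bA+\lambda)^{-1}=(H_\bA+\lambda)^{-1}P_\nu+(H_\bA+\lambda)^{-1}\overline{P_\nu}$ with $P_\nu=\mathbf{1}_{[0,\nu]}(H_\bA)$. The high-energy piece $V_0^{1/2}(H_\bA+\lambda)^{-1}\overline{P_\nu}V_0^{1/2}$ is compact and bounded uniformly in $\lambda\ge 0$, so by Weyl it contributes only a fixed finite number $C_\nu$ of eigenvalues above a threshold, \emph{globally} (not sector by sector). For the low-energy piece, one fibers in $m$ and proves that the Hilbert--Schmidt norm of the corresponding Birman--Schwinger block $K_{\nu,m}(\lambda)$ is $O(\nu^{2\alpha-2})$ uniformly in $(m,\lambda)$ --- this uses the uniform lower bound $\lambda_{m,n}(k)\ge C_0\,n\,e^{-k}$ (the paper's Lemma~\ref{Lem:Minoration}, proved separately; Theorem~\ref{thm1} alone does not give this because its constants and thresholds depend on $(m,n)$), Agmon estimates giving $\int\langle r\rangle^{-\alpha}|\tu_{m,n}(r,k)|^2\,dr=O(e^{-\alpha k})$ uniformly, and a Young-inequality convolution bound handling $v_\perp\in L^p$, $p\in[1,2]$, via $\hat v_\perp\in L^{p'}$. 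Since $\alpha>1$, this Hilbert--Schmidt norm can be made $<1$ by choosing $\nu$ small, so the low-energy blocks contribute exactly \emph{zero} eigenvalues above the threshold, for every $m$ simultaneously; this is how the infinitude of sectors is tamed. Without this high/low split and the ``zero contribution per $m$'' conclusion, there is no way to close the argument.
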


 Let us give some comments concerning the above results in comparison with known borderline behavior of perturbations of the Laplacian. It is not true in general that the number of negative eigenvalues of $-\Delta-V$ is larger than when adding a magnetic field, see Exemple 2 after Theorem 2.15 of \cite{AvrHerSi78}. Theorem \ref{thm2} is a case where the number of negative eigenvalues in presence of magnetic field is infinite as without magnetic field. 

However due to the diamagnetic inequality, one might expect for most cases that the density of negative eigenvalues is more important for $-\Delta -V$ than for $H_A-V$. The above results illustrate this phenomenon, indeed we prove that the borderline behavior of the perturbation determining the finiteness of the negative spectrum of $H_A-V$ is different than for $-\Delta -V$:
 


\begin{cor}\label{cor1}
Let $V$ be a measurable function on $\R^3$ that obeys
$$c \langle (x,y) \rangle^{-\alpha}\langle z \rangle^{-\gamma}\leq V(x,y,z) \leq C \langle (x,y) \rangle^{-\alpha}\langle z \rangle^{-\gamma},$$
with $\alpha+\gamma <2$, $\alpha>1$ and $\gamma >\frac12$.

Then the operator $-\Delta -V$ have infinitely many negative eigenvalues while the negative spectrum of $H_A-V$ is finite.
\end{cor}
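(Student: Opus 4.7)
The corollary packages Theorem~\ref{thm3} (which handles the magnetic operator) together with a classical variational argument that bounds the counting function of $-\Delta - V$ from below. I would treat the two assertions in turn. For the finiteness of the negative spectrum of $H_{\bA} - V$, the hypothesis yields the upper bound $V(x,y,z) \leq C \langle (x,y)\rangle^{-\alpha}\, v_\perp(z)$ with $v_\perp(z) := C \langle z\rangle^{-\gamma}$, and $\alpha > 1$ by assumption. To apply Theorem~\ref{thm3} it remains to exhibit $p \in [1,2]$ with $v_\perp \in L^p(\R)$, equivalently $p\gamma > 1$. Since $\gamma > 1/2$, the choice $p = 2$ (or any $p \in (1/\gamma, 2]$) does the job, and Theorem~\ref{thm3} then yields at most finitely many negative eigenvalues for $H_{\bA} - V$.

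\textbf{Infinitude for $-\Delta - V$.} For the infinite negative spectrum of $-\Delta - V$, I would exhibit arbitrarily many pairwise orthogonal trial functions with strictly negative Rayleigh quotient and conclude by the min-max principle. Fix a nonzero $\chi \in C^\infty_c(\R)$ supported in $[1,2]$ and a lacunary sequence $R_n \to \infty$ with $R_{n+1} \geq 3 R_n$, so that the cylindrical boxes
$$\Omega_n := \{(x,y,z) : R_n \leq r \leq 2R_n,\ R_n \leq z \leq 2R_n\},\qquad r := \sqrt{x^2+y^2},$$
are pairwise disjoint. Set $\psi_n(x,y,z) := \chi(r/R_n)\, \chi(z/R_n)$. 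A direct scaling computation in cylindrical coordinates yields $\|\psi_n\|_{L^2(\R^3)}^2 \sim R_n^3$ and $\|\nabla \psi_n\|_{L^2(\R^3)}^2 \sim R_n$, while on $\Omega_n$ the lower bound $V \geq c R_n^{-\alpha-\gamma}$ gives $\langle \psi_n, V \psi_n\rangle \geq c' R_n^{3-\alpha-\gamma}$. Consequently
$$\frac{\langle \psi_n, (-\Delta - V) \psi_n\rangle}{\|\psi_n\|_{L^2}^2} \leq C R_n^{-2} - c R_n^{-(\alpha+\gamma)},$$
which is strictly negative for $n$ large, since $\alpha + \gamma < 2$. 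The disjoint supports force $\langle \psi_i, (-\Delta - V)\psi_j\rangle = 0$ for $i \neq j$, so the Rayleigh quotient on $\mathrm{span}(\psi_1,\dots,\psi_N)$ is bounded above by the maximum over $n \leq N$ of the individual quotients, which is strictly negative. The min-max principle then provides at least $N$ eigenvalues of $-\Delta - V$ strictly below $0$ for every $N$, hence infinitely many.

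\textbf{Main difficulty.} There is no genuine technical obstacle; the only care needed is in the bookkeeping of the parameter ranges. The three conditions play complementary roles: $\alpha > 1$ and $\gamma > 1/2$ together guarantee $v_\perp \in L^p$ for some $p \leq 2$ as required by Theorem~\ref{thm3}, while $\alpha + \gamma < 2$ is precisely the ``slow decay in $\R^3$'' threshold beyond which the Cartesian-scaling trial function beats the kinetic energy of $-\Delta$. The open window $\alpha \in (1, 3/2)$, $\gamma \in (1/2, 2-\alpha)$ is nonempty, so the corollary is nonvacuous and indeed exhibits the announced depletion of the negative spectrum by the magnetic field.
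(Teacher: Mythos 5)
Your proposal is correct. For the finiteness of the negative spectrum of $H_{\bA} - V$ you follow the paper: take $v_\perp(z) = C\langle z\rangle^{-\gamma}$, note $v_\perp \in L^2(\R)$ since $\gamma > 1/2$, and invoke Theorem~\ref{thm3}. For the infinitude of the negative spectrum of $-\Delta - V$ you take a genuinely different route. The paper observes the pointwise bound $\langle (x,y)\rangle^{-\alpha}\langle z\rangle^{-\gamma} \geq \langle (x,y,z)\rangle^{-(\alpha+\gamma)}$ and then cites \cite[Theorem XIII.6]{ReSi78}, which yields infinitely many negative eigenvalues of $-\Delta - V$ whenever $V$ dominates a power $|x|^{-\beta}$ with $\beta < 2$ near infinity. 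You instead build the variational argument from scratch: disjointly supported cylindrical bumps $\psi_n = \chi(r/R_n)\chi(z/R_n)$ along a lacunary scale $R_n$, the scaling estimates $\|\psi_n\|^2 \sim R_n^3$, $\|\nabla\psi_n\|^2 \sim R_n$, the potential term $\langle V\psi_n, \psi_n\rangle \geq c\,R_n^{3-\alpha-\gamma}$, and the min-max principle. Both are sound. Your version is self-contained and makes transparent exactly where $\alpha+\gamma<2$ enters, at the cost of reproving a classical counting result the paper simply cites; the paper's route is shorter but leans on a reference. One small bookkeeping point: to guarantee all $N$ Rayleigh quotients on $\mathrm{span}(\psi_1,\dots,\psi_N)$ are strictly negative, start the lacunary sequence at an $R_1$ large enough that the individual quotient is already negative for $n=1$ (or discard a finite initial segment), rather than relying on negativity only ``for $n$ large.''
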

\begin{proof}
 Since  $\langle (x,y) \rangle^{-\alpha}\langle z \rangle^{-\gamma} \geq  \langle (x,y,z) \rangle^{-(\alpha+\gamma)}$, according to \cite[Theorem XIII.6]{ReSi78} 
we know that for $V(x,y,z) \geq  \langle (x,y) \rangle^{-\alpha}\langle z \rangle^{-\gamma}$ with $\alpha+\gamma<2$, the operator $-\Delta - V$ has infinitely many negative eigenvalues. The corollary is then deduced from Theorem \ref{thm3}.

\end{proof}
A natural open question concern the existence of a borderline behavior of $V$ which determine the finiteness of the negative spectrum of $H_A-V$. At the moment we can only say that, if it exists, such borderline potential $V_b$ satisfies:
$$C_- \langle (x,y) \rangle^{-\alpha_-}\langle z \rangle^{-\gamma_-} \leq V_b \leq C_+ \langle (x,y) \rangle^{-\alpha_+}\langle z \rangle^{-\gamma_+},$$
with $0< \alpha_- \leq \max(1-\frac{\gamma_-}2; \frac12)$, $\gamma_->0$ and $\alpha_+>1$, $\gamma_+> \frac12$. \Bk

\subsection{ Organisation of the article}
 In Section \ref{S:unperturbed} we recall basis on the fibers of the operator $H_{\bA}$ and their associated band functions $\lambda_{m,n}(k)$. We give the localization of the associated  eigenfunctions for large $k$ and we prove Theorem \ref{thm1}. We also provide numerical computations of the band functions. In Section \ref{S:3}, we construct quasi-modes for the perturbed operator $H_{\bA}-V$ that leads to study a one-dimensional problem and allows to prove Theorem \ref{thm2}. 
 Based on an uniform lower bound of the band functions, Section \ref{S:Finitenumber} combines the Birman-Schwinger principle with results of Section  \ref{S:unperturbed} to prove Theorem \ref{thm3}. The key point is an estimation of the Hilbert-Schmidt norm of  Birman-Schwinger type operator associated with the perturbed hamiltonian.
\Bk

\section{Description of the 1d problem associated with the unperturbed hamiltonian}
\label{S:unperturbed}
In this section we first recall results from \cite{Yaf03} on the behavior of the band functions $k\mapsto\lambda_{m,n}(k)$. Then we give Agmon estimates on the associated eigenfunctions and we perform an asymptotic expansion of $\lambda_{m,n}(k)$ when $k$ goes to $+\infty$.  In Section \ref{S:3} and \ref{S:Finitenumber} we will use these expansions to analyse the operator $H_{\bA}-V$.  \Bk

Depending on the context we shall work with different operators all unitarily equivalent to the operator $g_{m}(k)$ written in \eqref{D:gmk}. \Bk Table \ref{T:operators} gives a description of these operators and the notations we use.

\subsection{ Semi-classical point of view 
}
\label{SS:basefiber}
\Bk
\paragraph{Global behavior of the band functions}
As in \cite{Yaf03}, we introduce the parameter $$h:=e^{-k}$$ such that $\log r-k=\log (hr)$. The scaling  $\rho=h r$ shows that $g_{m}(k)$ is unitarily equivalent to 
\begin{equation}
\label{D:gh}
\gg_{m}(h):=
-h^2\frac{1}{\rho}\partial_{\rho}\rho\partial_{\rho}+h^2\frac{m^2}{\rho^2}+(\log \rho)^2 \ 
\end{equation}
acting on $L_\rho^2(\R_{+}):=L^2(\R_{+},\rho \rd \rho)$. We denote by $(\mu_{m,n}(h),\gu_{m,n}(\cdot,h))_{n\geq1}$ the normalized eigenpairs of this operator and by $\gq^{m}_{h}$ the associated quadratic form. \Bk We have $\mu_{m,n}(h)=\lambda_{m,n}(k)$ and 
$$\gu_{m,n}(\rho,h)= h \Bk u_{m,n}\left(\frac{\rho}{h},-\log h\right)$$ 
where $u_{m,n}(\cdot,k)$ is a normalized eigenfunction associated with $\lambda_{m,n}(k)$ for $g_{m}(k)$. \Bk Using the min-max principle and the expression \eqref{D:gh}, it is clear that
$h\mapsto \mu_{m,n}(h)$ is non decreasing on $(0,+\infty)$ and therefore $k\mapsto \lambda_{m,n}(k)$ is non increasing on $\R$. It was already used by Yafaev (see \cite{Yaf03}) who, moreover, \Bk shows (see \cite[Lemma 2.2 \& 2.3]{Yaf03}) that 
$$\lim_{h\to 0}\mu_{m,n}(h)=0 \quad \mbox{and}\quad \lim_{h\to +\infty}\mu_{m,n}(h)=+\infty \, .$$
Note that these results are extended to more general magnetic fields in \cite[Section 3]{Yaf08}.

\paragraph{The fiber operator in an unweighted space}
Sometimes it will be convenient to work in an unweighted Hilbert space on the half-line, therefore we introduce the isometric transformation 
\begin{align*}
\cM\  :\   L^2(\R_{+}, r \rd r) & \longmapsto L^2(\R_{+},\rd r)
\\
 u(r)& \longmapsto \sqrt{r} \, u(r)
\end{align*}
and we define $ \tg_{m}(k):=\cM g_{m}(k) \cM^{*}$. This operator expressed as
\begin{equation}
\tg_{m}(k):=-\partial_{r}^2+\frac{m^2-\frac{1}{4}}{r^2}+(\log r-k)^2 \, ,
\end{equation}
acting on $L^2(\R_{+})$ and its precise definition can be derived from the natural associated quadratic form initially defined on $\cC^{\infty}_{0}(\R_{+})$ and then closed to $L^2(\R_{+})$.

\subsection{Agmon estimates about the eigenpairs of the fiber operator}
We write 
$$\gg_{m}(h)=-h^2\frac{1}{\rho}\partial_{\rho}\rho\partial_{\rho}+V_{h}^{m}$$
with 
$$V_{h}^{m}(\rho):=\log(\rho)^2+h^2\frac{m^2}{\rho^2}\, .$$
Let  $\gq_{m}^h$ denote the natural associated quadratic form.
Assume that $\mu$ is an eigenvalue satisfying $\mu\leq E+O(h)$ with $E\geq0$, the eikonale equation on the Agmon weight $\gphi$ writes 
$$ h^2 |\gphi'|^2=V_{h}^{m}-E$$
that is 
$$|\gphi'(\rho)|^2=\frac{(\log \rho)^2-E}{h^2}+\frac{m^2}{\rho^2} \, .$$
A solution is given by $\gphi_{h}(\rho)/h$ with 
\begin{equation}
\label{D:agmongphi}
\gphi_{h}(\rho):=\left|\int_{1}^{\rho}\sqrt{\left((\log \rho')^2-E+h^2\frac{m^2}{{\rho'}^{2}}\right)_{+}} \rd \rho'\right|
\end{equation}
\Bk

This function provides the general Agmon estimates: 
\begin{prop}
\label{P:Agmonestimates}
Let $E \geq0$ and $C_{0}>0$. For all $\beta\in (0,1)$ there exist $C(E,\beta)>0$ and $h_{0}>0$ such that for all eigenpairs $(\mu,\gu_{\mu})$ of $\gg_{m}(h)$ with $\mu\leq E+C_{0}h$ and $\gu_{\mu}$ that is $L^2_{\rho}$-normalized there holds:
\begin{equation}
\label{E:Agmestimates1}
\forall h\in (0,h_{0}), \quad \|e^{\beta\frac{\gphi_{h}}{h}}\gu_{\mu}\|_{L^2_{\rho}(\R_{+})} \leq C(E,\beta) \ \ \mbox{and} \ \  \gq_{m}^{h}\left(e^{\beta\frac{\gphi_{h}}{h}}\gu_{\mu}\right) \leq C(E,\beta) \, .
\end{equation}
\end{prop}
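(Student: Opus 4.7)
The plan is to carry out the standard Agmon--Persson energy method in the radial $L^2_\rho$-setting. Take $\gu_\mu$ real-valued and $L^2_\rho$-normalized, and set $w := e^{\beta\gphi_h/h}$. Multiplying the eigenvalue equation $\gg_m(h)\gu_\mu = \mu\gu_\mu$ by $w^2\gu_\mu$, integrating by parts in $L^2(\R_+,\rho\rd\rho)$ (legitimate after truncating $\gphi_h$ at large values and passing to the limit by monotone convergence), and using the eikonal identity $|\gphi_h'|^2 = (V_h^m - E)_+$ yields the Agmon identity
\[
h^2\int_{\R_+}|(w\gu_\mu)'|^2 \rho\rd\rho + \int_{\R_+}\bigl(V_h^m - \mu - \beta^2 (V_h^m - E)_+\bigr) w^2\gu_\mu^2\rho\rd\rho = 0.
\]
Since $\gphi_h\equiv 0$ on the classically allowed set $\{V_h^m\leq E\}$ (which contains $\rho=1$ for $h$ small), $w\equiv 1$ there. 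Splitting the integral by the sign of $V_h^m - E$, using $\mu\leq E+C_0 h$ and $\|\gu_\mu\|_{L^2_\rho}=1$, and moving non-positive terms across, one reduces the identity to
\[
(1-\beta^2)\int_{\{V_h^m\geq E\}}(V_h^m - E) w^2\gu_\mu^2\rho\rd\rho + h^2\int|(w\gu_\mu)'|^2\rho\rd\rho \leq C_0 h\|w\gu_\mu\|_{L^2_\rho}^2 + E + C_0 h.
\]

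I would then split the forbidden region at a scale $\delta=\delta(h)$ chosen to balance two competing needs: on $\{V_h^m\geq E+\delta\}$ the lower bound $(1-\beta^2)\delta$ on the coefficient must beat the error $C_0 h$, while on the complementary ``turning shell'' $\{E<V_h^m<E+\delta\}$ the weight $w$ must remain uniformly bounded in $h$. A direct estimate from the formula $\gphi_h(\rho)=\int_1^\rho\sqrt{(V_h^m-E)_+}\rd\rho'$ shows that on the shell $\gphi_h\leq C(E)\delta$ when $E=0$ (the shell has $\rho$-width $\sim\sqrt{\delta}$), and $\gphi_h\leq C(E)\delta^{3/2}$ when $E>0$ (shell $\rho$-width $\sim\delta/\sqrt{E}$). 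In the main case $E=0$ the choice $\delta=2C_0 h/(1-\beta^2)$ simultaneously preserves the coercivity on $\{V_h^m\geq\delta\}$ and yields $w\leq C(E,\beta)$ on the shell; the shell and allowed-region contributions are then bounded using only $\|\gu_\mu\|=1$, and absorbing the $C_0 h\|w\gu_\mu\|^2$ term into the left-hand side for $h$ small enough gives $\|w\gu_\mu\|_{L^2_\rho}\leq C(E,\beta)$. The case $E>0$ is handled by the same circle of ideas combined with a two-scale cutoff separating near-turning-point and far-forbidden contributions.

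For the quadratic-form bound in \eqref{E:Agmestimates1} I would rewrite the Agmon identity as
\[
\gq_m^h(w\gu_\mu) = \mu\|w\gu_\mu\|_{L^2_\rho}^2 + \beta^2\int(V_h^m - E)_+ w^2\gu_\mu^2\rho\rd\rho,
\]
and estimate its right-hand side using the $L^2$ bound just proved together with the coercive control on $\int(V_h^m-E)_+ w^2\gu_\mu^2$ that falls out along the way. The main technical obstacle throughout is the tuning of $\delta$ around the turning shell: too small a $\delta$ destroys coercivity, while too large a $\delta$ spoils uniform control of $w$. The assumption $\beta<1$ is used essentially, since the factor $(1-\beta^2)$ must be strictly positive to drive the absorption step at the end.
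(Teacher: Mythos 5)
Your overall strategy is the same one the paper uses (and attributes to Helffer): form the Agmon identity for the conjugated operator, plug in the eikonal weight, split the forbidden region at a sliding scale $\delta$, and absorb. For $E=0$ your argument closes correctly: the allowed set $\{V_h^m\le 0\}$ has measure zero, so the right-hand side of your energy inequality is $O(h)$, and the choice $\delta=2C_0h/(1-\beta^2)$ gives both coercivity $(1-\beta^2)\delta>C_0h$ on $\{V_h^m\ge\delta\}$ and a uniformly bounded weight on the shell $\{0\le V_h^m<\delta\}$, since $\gphi_h\lesssim\delta\sim h$ there.

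The case $E>0$, which you dispatch with ``the same circle of ideas combined with a two-scale cutoff,'' is where I see a genuine gap, and it is not cosmetic since the paper does invoke the Proposition with $E=\nu_0>0$ in the proof of Lemma~\ref{Lem:Minoration}. The obstruction is the $O(1)$ constant $E$ on the right-hand side of your inequality, which comes from the allowed region (where $V_h^m-\mu\ge-\mu\ge-(E+C_0h)$ with $w\equiv 1$), and not from the $C_0h$ slack in $\mu\le E+C_0h$ that your $\delta$-choice is calibrated to beat. Concretely, dividing by the coercivity $(1-\beta^2)\delta$ on $\{V_h^m\ge E+\delta\}$ gives
\[
\int_{\{V_h^m\ge E+\delta\}}w^2\gu_\mu^2\,\rho\,\rd\rho\;\lesssim\;\frac{E}{\delta},
\]
which forces $\delta\gtrsim E$ for an $h$-uniform bound; but bounding $w$ on the shell $\{E<V_h^m<E+\delta\}$ requires $\gphi_h\lesssim h$, i.e. $\delta^{3/2}/\sqrt{E}\lesssim h$, i.e. $\delta\lesssim(h\sqrt{E})^{2/3}$. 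For $E>0$ fixed these two constraints are incompatible as $h\to 0$, and no rebalancing of a second cutoff scale removes the tension: whatever choice you make, you end up with either an $E/\delta$ blow-up or an $e^{c/h}$ weight on the shell. To prove the clean $O(1)$ bound for $E>0$ one needs an input that detects the smallness of the eigenfunction already on the turning shell (an $a\ priori$ exponential decay at a slightly lower energy, a WKB/Airy estimate in the transition layer, or an iterated Agmon bootstrap), none of which is present in your sketch. You should either supply that input, or restrict the Proposition to $E=0$ and prove separately the weaker $e^{\varepsilon/h}$-loss version for $E>0$, which is in fact all that Lemma~\ref{Lem:Minoration} uses.
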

\begin{proof}
This proposition is an application of the well-known Agmon estimates for 1D Schr\"odinger operators with confining potential. First we have the following identity for any Lipschitz bounded function $\gphi$, see for example \cite{Si82}, \cite{Ag85} or \cite{HeSj85}:
\begin{equation}
\label{E:IMSgh}
\langle \gg_{m}(h)u,e^{2\gphi} u \rangle_{L^2_{\rho}(\R_{+})}=\gq^{m}_{h}(e^{\gphi} u)-h^2\|\gphi' e^{\gphi}u \|_{L^2_{\rho}(\R_{+})}^2 \, .
\end{equation}
In particular when $u=\gu_{\mu}$ is an eigenfunction associated with the eigenvalue $\mu$ we get
\begin{equation}
\label{E:IMSweight}
\int_{\R_{+}}\left( h^2|\partial_{\rho}(e^{\gphi}\gu_{h})|^2+ \left( V_{h}^{m}-h^2|\gphi'|^2-\mu\right)|e^{\gphi}\gu_{h}|^2 \right) \rho \rd \rho=0 \, .
\end{equation}
We now use this identity with $\gphi=\gphi_{h}/h$ where $\gphi_{h}$ is 
defined in \eqref{D:agmongphi}. The remain of the proof is classical and can be found with details in \cite[Proposition 3.3.1]{He88} for example. 
\end{proof}
Note that 
$$\gphi_{h}(\rho) \geq \gphi_{0}(\rho)=\left|\int_{1}^{\rho}\sqrt{\left((\log \rho')^2-E\right)_{+}} \,\rd \rho'\right|$$
 that does not depend neither on $m$ nor on $h$. Therefore \eqref{E:Agmestimates1} remains true replacing $\gphi_{h}$ by $\gphi_{0}$ and we get $L^2$ estimates uniformly in $m$, in particular:
\begin{equation}
\label{E:Agmmeg0}
\forall \beta\in (0,1), \forall h\in (0,h_{0}), \quad \|e^{\beta\frac{\gphi_{0}}{h}}\gu_{m,n}(\cdot,h) \|_{L^2_{\rho}(\R_{+})} \leqÊC(E,\beta) 
\end{equation}
for all normalized eigenfunction $\gu_{m,n}(\cdot,h)$ of $\gg_{m}(h)$ associated with any eigenvalue $\mu_{m,n}(h)$ satisfying $\mu_{m,n}(h) \leq E+C_{0}h$ where $C_{0}>0$ is a set constant.

When $E=0$ (that means that we are looking at the low-lying energies) the Agmon distance $\gphi_{0}$ is explicit: 
\begin{align*}
\gphi_{0}(\rho)&=\left|\int_{1}^{\rho}|\log \rho'| \rd \rho'\right|
\\
&=\left|[\rho' \log\rho'-\rho']_{1}^{\rho}\right|=\left|\rho \log \rho-\rho+1\right| \, .
\end{align*}
Let us express 
this in the original cylindrical variable $r=\frac{\rho}{h}$ with the Fourier parameter $k=-\log h$. The associated Agmon distance writes 
\bel{D:PhiAg}
\Phi_{0}(r,k):=\frac{\gphi_{0}(\rho)}{h}=e^k \gphi_{0}(re^{-k})= r(\log(r)-k) -r+e^{k} \, .
\ee
Writing the previous estimates in these variables we get that for $k$ large enough:
\begin{equation}
\label{E:agmonnormeL2}
\|e^{\beta \Phi_{0}(\cdot,k)}u_{m,n}(\cdot,k) \|_{L^2_{r}(\R_{+})}  \leq C(0,\beta) \ \ \mbox{and} \ \ \|e^{\beta \Phi_{0}(\cdot,k)}\tu_{m,n}(\cdot,k) \|_{L^2(\R_{+})} \leq C(0,\beta)
\end{equation}
where $\tu_{m,n}(r):=\sqrt{r}\, u_{m,n}(\cdot,k)$ is a normalized eigenvector associated with $\lambda_{m,n}(k)$ for the operator $\tg_{m}(k)$ in the unweighted space $L^2(\R_{+})$.

The function $r\mapsto \Phi_{0}(r,k)$ is positive, decreasing on $(0,e^{k})$ and increasing on $(e^{k},+\infty)$. It vanishes when $r=e^{k}$, so we find that the eigenfunction of the operator $g_{m}(k)$ are localized at the minimum of the wells $r=e^{k}$.

\subsection{Asymptotics for the small energy}
In this section we provide an asymptotic expansion of $\mu_{m,n}(h)$ for fixed $(m,n)$ when $h$ goes to 0, namely:

\begin{prop}
\label{P:asymptoticvp}
For all $(m,n)\in \Z\times \N^*$ there exists $C_{m,n}>0$ and $h_{0}>0$ such that 
$$\forall h\in (0,h_{0}), \quad |\mu_{m,n}(h)-(2n-1)h-(m^2-\tfrac{1}{4}-\tfrac{n(n-1)}{2})h^2| \leqÊC_{m,n}h^{5/2}.$$
\end{prop}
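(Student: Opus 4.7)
The plan is to perform a harmonic approximation of $\gg_m(h)$ near the unique non-degenerate minimum $\rho = 1$ of $(\log \rho)^2$ and then build quasi-modes accurate enough to pin down each $\mu_{m,n}(h)$ up to $h^{5/2}$. Via the isometry $\cM$ of Section \ref{SS:basefiber}, $\gg_m(h)$ is unitarily equivalent to $-h^2 \partial_\rho^2 + h^2 (m^2 - \tfrac14)/\rho^2 + (\log \rho)^2$ acting on $L^2(\R_+, \rd\rho)$. I rescale $\rho = 1 + h^{1/2}\tau$, divide by $h$, and Taylor-expand in powers of $h^{1/2}$:
\begin{equation*}
\tfrac{1}{h} \gg_m(h) = H_0 + h^{1/2} H_1 + h H_2 + O(h^{3/2}),
\end{equation*}
with $H_0 = -\partial_\tau^2 + \tau^2$, $H_1 = -\tau^3$, and $H_2 = \tfrac{11}{12}\tau^4 + m^2 - \tfrac14$. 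The Agmon estimates of Proposition \ref{P:Agmonestimates} applied with $E = 0$ already confine eigenfunctions to the scale $|\rho - 1|\lesssim h^{1/2}$, which legitimates the formal expansion when tested against Gaussian-decaying profiles.

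Next I would apply standard Rayleigh--Schr\"odinger perturbation theory to the $H_0$-eigenpair $((2n-1), \psi_n)$, where $\psi_n$ denotes the $n$-th normalized Hermite function. The first-order shift $\langle\psi_n, H_1 \psi_n\rangle$ vanishes by parity since $\tau^3$ is odd. The first eigenfunction correction $\phi^{(1)}$ is the unique solution orthogonal to $\psi_n$ of $(H_0 - (2n-1))\phi^{(1)} = -H_1\psi_n$, and the second-order eigenvalue coefficient reads
\begin{equation*}
e_2 = \langle \psi_n, H_2 \psi_n\rangle + \langle \psi_n, H_1 \phi^{(1)}\rangle.
\end{equation*}
Expanding using the standard matrix elements of $\tau^3$ and $\tau^4$ in the Hermite basis (in particular $\langle\psi_n, \tau^4 \psi_n\rangle = \tfrac34(2n^2 - 2n + 1)$ together with the explicit action of $\tau^3\psi_n$), a direct calculation yields $e_2 = m^2 - \tfrac14 - \tfrac{n(n-1)}{2}$, matching the target coefficient.

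To make the expansion rigorous I would construct the quasi-mode
\begin{equation*}
v_{m,n}(\rho,h) := h^{-1/4}\,\chi(\rho)\,\bigl[\psi_n(\tau) + h^{1/2}\phi^{(1)}(\tau) + h\,\phi^{(2)}(\tau)\bigr],
\end{equation*}
where $\chi \in \cC^\infty_0$ equals $1$ near $\rho = 1$ and is supported away from $\rho = 0$, and $\phi^{(2)}$ solves the further non-resonant equation $(H_0 - (2n-1))\phi^{(2)} = e_2\psi_n - H_1\phi^{(1)} - H_2\psi_n$. Using the Gaussian decay of the three profiles to control the Taylor remainders of $(\log \rho)^2$ and $h^2/\rho^2$ on the support of $\chi$, and using Proposition \ref{P:Agmonestimates} to absorb the exponentially small contribution of the cutoff, one obtains
\begin{equation*}
\bigl\|\bigl(\gg_m(h) - (2n-1)h - e_2 h^2\bigr) v_{m,n}\bigr\|_{L^2} = O(h^{5/2}).
\end{equation*}
The spectral theorem then places an eigenvalue of $\gg_m(h)$ within $O(h^{5/2})$ of $E_{m,n}(h) := (2n-1)h + e_2 h^2$.

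Finally, to identify this eigenvalue as precisely $\mu_{m,n}(h)$, I would run the construction in parallel for $n' = 1, \ldots, n$. The quasi-modes $v_{m,n'}$ are asymptotically orthonormal since the Hermite functions are, and the targets $E_{m,n'}(h)$ are pairwise separated by $2h + O(h^2) \gg h^{5/2}$. A min-max argument on the $n$-dimensional span of the $v_{m,n'}$ yields the upper bound $\mu_{m,n}(h) \leq E_{m,n}(h) + O(h^{5/2})$; combined with the fact that the spectral theorem produces $n$ distinct eigenvalues in the disjoint $O(h^{5/2})$-neighbourhoods of $E_{m,1}(h), \ldots, E_{m,n}(h)$, the first $n$ eigenvalues of $\gg_m(h)$ must pair one-to-one with these targets, giving the matching lower bound. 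The main technical point is precisely this matching between quasi-modes and true eigenvalues, and it is forced by the $\Theta(h)$ gap between consecutive leading-order terms, which dominates the $O(h^{5/2})$ quasi-mode error uniformly over $n' \leq n$.
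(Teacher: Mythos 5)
Your construction of the quasi-modes, the rescaling $\rho = 1 + h^{1/2}\tau$, the Taylor expansion, and the Rayleigh--Schr\"odinger calculation of the coefficients $E_0 = 2n-1$, $E_1 = 0$ and $E_2 = m^2 - \tfrac14 - \tfrac{n(n-1)}{2}$ all match the paper's route (which works with $\hgg_m(h)$ after the $\cM$-conjugation), and the cutoff argument plus spectral theorem correctly give the \emph{upper} bound $\mu_{m,n}(h) \leq E_{m,n}(h) + O(h^{5/2})$.

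The lower bound, however, is where you have a genuine gap. You assert that the first $n$ eigenvalues ``must pair one-to-one'' with the targets $E_{m,1}(h),\ldots,E_{m,n}(h)$ because (a) min--max gives the upper bound on $\mu_{m,n}(h)$, and (b) the spectral theorem places one eigenvalue in each $O(h^{5/2})$-neighbourhood, and these neighbourhoods are separated by $\Theta(h)$. This does not follow: those two facts guarantee there are \emph{at least} $n$ eigenvalues below $E_{m,n}(h) + O(h^{5/2})$, but they do not exclude \emph{spurious} eigenvalues sitting between the targets (say at $E_{m,1}(h) + h/2$) and not captured by any quasi-mode. If such an eigenvalue existed, it would push the labelling so that $\mu_{m,n}(h)$ would in fact lie near $E_{m,n-1}(h)$, which still satisfies your upper bound and produces no contradiction. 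The $\Theta(h)$ gap between the targets by itself does not ``force'' the matching.

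What is needed to close this is a \emph{rough lower bound} on the spectrum, i.e.\ $\mu_{m,n}(h)/h \geq (2n-1) - o(1)$ uniformly, which bounds the number of eigenvalues below $E_{m,n}(h) + O(h^{5/2})$ by exactly $n$. This is the standard companion step in the semiclassical harmonic approximation: the paper invokes a Grushin-type reduction, using the \emph{exact} eigenfunctions (whose Agmon localisation near $t=0$ you already established) as quasi-modes for the harmonic oscillator $L_0$, to show the true eigenvalues are bounded below by the Landau levels up to a remainder. Once you have this rough lower bound, your gap-plus-spectral-theorem argument does pin down $\mu_{m,n}(h)$. Without it, the conclusion is not justified.
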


The operator $\gg_{m}(h)$ written in \eqref{D:gh} is a semiclassical Schr\"odinger operator with a potential which has a unique minimum at $\rho=1$. We will use the technics of the harmonic approximation as described in \cite{DiSj99}, \cite{Si82} or \cite{He88} to derive the asymptotics of the eigenvalues. The remain of this section is devoted to the proof of Proposition \ref{P:asymptoticvp} which implies Theorem \ref{thm1} because $\lambda_{m,n}(k)=\mu_{m,n}(e^{-k})$.

\paragraph{Canonical transformations}
As above we introduce the operator $\tgg_{m}(h):=\cM \gg_{m}(h) \cM^{*}$ in the unweighted space where $\cM: \gu(\rho) \mapsto \sqrt{\rho}\,\gu(\rho)$. We get
$$\tgg_{m}(h)=-h^2\partial_{\rho}^2+h^2\frac{m^2-\frac{1}{4}}{\rho^2}+\log^2 \rho$$
acting on the unweighted space $L^2(\R_{+})$. \Bk
Apply now the change of variable $t=\frac{\rho-1}{\sqrt{h}}$. We get that $\tgg_{m}(h)$ is unitarily equivalent to $h\hgg_{m}(h)$ where
$$\hgg_{m}(h):=-\partial_{t}^2+\frac{\log^2(1+\sqrt{h}t)}{h}+h\frac{m^2-\frac{1}{4}}{(1+\sqrt{h}t)^2}$$
acting on $L^2(I_{h})$ with $I_{h}=(-h^{-1/2},+\infty)$. As we will see below, this operator has a suitable shape to make an asymptotic expansion of its eigenvalues when $h\to 0$. 
 
 \paragraph{Asymptotic expansion and formal construction of quasi-modes}
We write a Taylor expansion of the potential near $t=0$:
\begin{align}
\label{E:taylorpotentiel}
\frac{\log^2(1+\sqrt{h}t)}{h}+h\frac{m^2-\frac{1}{4}}{1+\sqrt{h}t}=t^2-h^{1/2}t^3+(\tfrac{11}{12}t^4+m^2-\tfrac{1}{4})h+R(t,h) 
\end{align}
where $R(t,h)$ will later be controlled by $(1+|t|)^5h^{3/2}$.

We write 
$$\hgg_{m}(h)=L_{0}+h^{1/2}L_{1}+hL_{2}+R(\cdot,h)$$
where 
\begin{equation*}
\left\{
\begin{aligned}
&L_{0}:=-\partial_{t}^2+t^2 \, , 
\\
&L_{1}:=-t^3 \, ,
\\
&L_{2}:=\left(\tfrac{11}{12}t^4+m^2-\tfrac{1}{4}\right) \, .
\end{aligned}
\right.
\end{equation*}
At first we consider these operator as acting on $L^2(\R)$ and we look at a quasi-mode for $L_{0}+h^{1/2}L_{1}+hL_{2}$ defined on $\R$. Using a suitable cut-off function this procedure will provide a quasi-mode for $\hgg_{m}(h)$. 

We look for
a quasi-mode of the form 
$$(E(h),f(\cdot,h))=(E_{0}+h^{1/2}E_{1}+hE_{2},f_{0}+h^{1/2} f_{1}+hf_{2})\, .$$
We are led to solve the following system: 
\begin{subnumcases}{}
\label{Eqsyst-1}
L_{0}f_{0}=E_{0}f_{0}\, ,\\
\label{Eqsyst-2}
L_{1}f_{0}+L_{0}f_{1}=E_{0}f_{1}+E_{1}f_{0}\, , \\
\label{Eqsyst-3}
L_{2}f_{0}+L_{1}f_{1}+L_{0}f_{2}=E_{2}f_{0}+E_{1}f_{1}+E_{0}f_{2}\, .
\end{subnumcases}
Since $L_{0}$ is the quantum harmonic oscillator, to solve \eqref{Eqsyst-1} we choose for $E_{0}$ the $n$-th Landau level: 
\begin{equation}
\label{E:defE0}
E_{0}:=2n-1, \ \ n \geq 1
\end{equation}
and 
$$f_{0}=f_{0,n}:=\Psi_{n}, \ \ n \geq 1$$
where $\Psi_{n}$ is the $n$-th normalized Hermite's function with the convention that $\Psi_{1}(t)=(2\pi)^{-1/4}e^{-t^2/2}$.

We take the scalar product of \eqref{Eqsyst-2} against $f_{0,n}$ and we find 
$$E_{1}=\langle (L_{0}-E_{0})f_{1},f_{0,n} \rangle +\langle L_{1}f_{0,n},f_{0,n}\rangle=\langle L_{1}f_{0,n},f_{0,n}\rangle \ . $$
Notice that $f_{0,n}$ is either even or odd and that $L_{1}f_{0,n}$ has the opposite parity. Therefore the function $L_{1}f_{0,n}\cdot f_{0,n}$ is odd for all $n\geq1$ and we get 
\begin{equation}
\label{E:defE1}
E_{1}=0 \, .
\end{equation}
We find $f_{1}$ by solving
\eqref{Eqsyst-2}:
\begin{equation}
\label{Eqsyst-2bis}
(L_{0}-E_{0})f_{1}=-L_{1}f_{0,n}=t^3 \Psi_{n}(t) \ . 
\end{equation}
Using $t\Psi_{n}(t)=\sqrt{\frac{n-1}{2}}\Psi_{n-1}(t)+\sqrt{\frac{n}{2}}\Psi_{n+1}(t)$, we write $t^3\Psi_{n}(t)$ on the basis of the Hermite's functions:
$$t^3\Psi_{n}(t)=a_{n}\Psi_{n-3}(t)+b_{n}\Psi_{n-1}(t)+c_{n}\Psi_{n+1}(t)+d_{n}\Psi_{n+3}(t) \ $$
with
\begin{equation}
\label{E:an--dn}
\forall n\geq1, \ \  \, 
\left\{
\begin{aligned}
&a_{n}=2^{-3/2}\sqrt{(n-1)(n-2)(n-3)}\\
&b_{n}=2^{-3/2}3(n-1)\sqrt{n-1}\\
&c_{n}=2^{-3/2}3n\sqrt{n}\\
&d_{n}= 2^{-3/2}\sqrt{n(n+1)(n+2)} \ .
\end{aligned}
\right.
\end{equation}
Therefore the unique solution to \eqref{Eqsyst-2bis} orthogonal to $f_{0,n}$ is:
$$f_{1}=f_{1,n}:=\left(-\frac{a_{n}}{6}\Psi_{n-3}-\frac{b_{n}}{2}\Psi_{n-1}+\frac{c_{n}}{2}\Psi_{n+1}+\frac{d_{n}}{6}\Psi_{n+3}\right) \  $$
with $a_{n}=0$ when $n\leq3$ and $b_{n}=0$ when $n=1$ (see \eqref{E:an--dn}).

We now take the scalar product of \eqref{Eqsyst-3} against $f_{0,n}$: 
\begin{equation}
\label{Compatibilite:E2}
E_{2}=\langle L_{2}f_{0,n},f_{0,n} \rangle+\langle L_{1}f_{1,n},f_{0,n}\rangle  \ .
\end{equation}
Computations provides
$$\langle L_{2}f_{0,n},f_{0,n} \rangle=\left(\tfrac{11}{12}\|t^2 f_{0,n} \|^2+m^2-\tfrac{1}{4} \right)= \left(\tfrac{11}{16}(2n^2-2n+1)+m^2-\tfrac{1}{4} \right)\ $$
and
$$\langle L_{1}f_{1,n},f_{0,n}\rangle =\left(\frac{a_{n}^2}{6}+\frac{b_{n}^2}{2}-\frac{c_{n}^2}{2}-\frac{d_{n}^2}{6} \right)=\frac{1}{16}\left(-30n^2+30n-11 \right)\, , $$
therefore we get
\begin{equation}
\label{E:defE2}
E_{2}=\left(-\frac{n(n-1)}{2}+m^2-\frac{1}{4} \right) \ . 
\end{equation}
We deduce from \eqref{Eqsyst-3}: 
$$(L_{0}-E_{0})f_{2}=E_{2}f_{0,n}-L_{1}f_{1,n}-L_{2}f_{0,n} \, . $$
Since the compatibility condition is satisfies by the choice of $E_{2}$ (see \eqref{Compatibilite:E2}), the Fredholm alternative provides a unique solution $f_{2}=f_{2,n}$ orthogonal to $f_{0,n}$. As above it may be computed explicitly using the Hermite's functions. Notice that $f_{2,n}$ depends on $m$ as $E_{2}$, see \eqref{E:defE2}.

We finally define
$$f_{m,n}(t,h):=f_{0,n}(t)+h^{1/2}f_{1,n}(t)+hf_{2,n}(t)$$

\paragraph{Evaluation of the quasi-mode and upper bound}
The above formal construction provides functions on $\R$ and we will now use a cut-off function in order to get quasi-modes for $\hgg_{m}(h)$. Let $\chi\in \cC^{\infty}_{0}(\R,[0,1])$ be a  cut-off function increasing such that $\chi(t)=0$ when $t \leq -1/2$ and $\chi(t)=1$ when $t \geq -1/4$. We define $\chi(t,h):=\chi(h^{1/2}t)$ and 
$$\hgv_{m,n}(t,h):=\chi(t,h)f_{m,n}(t,h) \, .$$
Recall that $\hgg_{m,n}(h)$ acts on $L^2(I_{h})$ with $I_{h}=(-h^{-1/2},+\infty)$. Since $\supp\left(\hgv_{m,n}(\cdot,h)\right)\subset (-\frac{1}{2}h^{-1/2},+\infty)$ and $\hgv_{m,n}(\cdot,h)$ has exponential decay at $+\infty$, we have $\hgv_{m,n}\in \dom(\hgg_{m}(h))$. Let $$E_{m,n}(h):=E_{0}+h^{1/2}E_{1}+hE_{2}$$
where $E_{0}$, $E_{1}$ and $E_{2}$ are defined in \eqref{E:defE0}, \eqref{E:defE1} and \eqref{E:defE2}. 

We now evaluate $\| \left(\hgg_{m}(h)-E_{m,n}(h)\right)\hgv_{m,n}(\cdot,h)\|_{L^2(I_{h})}$. The procedure is rather elementary but for the sake of completeness we provide details below.
We have 
\begin{multline}
\label{E:decoupageestimqm}
\| \left(\hgg_{m}(h)-E_{m,n}(h)\right)\hgv_{m,n}(\cdot,h)\|_{L^2(I_{h})} \leqÊ\| [\hgg_{m}(h),\chi(\cdot,h)] f_{m,n}(\cdot,h)\|_{L^2(I_{h})}
\\
+\| \chi(\cdot,h)R(\cdot,h)f_{m,n}(\cdot,h) \|_{L^2(I_{h})}
+\|\chi(\cdot,h)\left(L_{0}+h^{1/2}L_{1}
+hL_{2} -E_{m,n}(h)\right)f_{m,n}(\cdot,h) \|_{L^2(I_{h})}
\end{multline}
We have $ [\hgg_{m}(h),\chi(\cdot,h)]f_{m,n}(t,h)=-2h^{1/2}\chi'(h^{1/2}t)f_{m,n}'(t,h)-h\chi''(h^{1/2}t)f_{m,n}(t,h)$ therefore $t\mapsto [\hgg_{m}(h),\chi(\cdot,h)]f_{m,n}(t,h)$ is supported in $[-\frac{1}{2}h^{-1/2},-\frac{1}{4}h^{-1/2}]$ and since $f_{m,n}(\cdot,h)$ and $f_{m,n}'(\cdot,h)$ have exponential decay we get 
\begin{equation}
\label{E:estimCommutator}
 \| [\hgg_{m}(h),\chi(\cdot,h)] f_{m,n}(\cdot,h)\|_{L^2(I_{h})}= \cO(h^{\infty})\, .
 \end{equation}
Remind that $R(t,h)$ is defined in \eqref{E:taylorpotentiel}, we get 
$$\exists C>0, \forall h>0, \forall t\in \supp(\chi(\cdot,h)), \quad |R(t,h)| \leq C h^{3/2}(1+|t|)^5 .$$
Using the exponential decay of $f_{m,n}$ we get $C_{m,n}>0$ such that 
\begin{equation}
\label{E:estimereste}
\| \chi(\cdot,h)R(\cdot,h)f_{m,n}(\cdot,h) \|_{L^2(I_{h})} \leq C_{m,n}h^{3/2}.
\end{equation}
The last term of \Gr \eqref{E:decoupageestimqm} \Bk 
 is easily computed: 
$$
\left(L_{0}+h^{1/2}L_{1}
+hL_{2} -E_{m,n}(h)\right)f_{m,n}(\cdot,h)=h^{3/2}\left((L_{1}-E_1)f_{2,n}+(L_{2}-E_2)f_{1,n} \right)+h^2L_{2}f_{2,n}
$$
and we get $C_{m,n}>0$ such that
$$\|\chi(\cdot,h)\left(L_{0}+h^{1/2}L_{1}
+hL_{2} -E_{m,n}(h)\right)f_{m,n}(\cdot,h) \|_{L^2(I_{h})} \leq C_{m,n} h^{3/2}.$$
Combining this with \eqref{E:estimCommutator} and \eqref{E:estimereste} in \eqref{E:decoupageestimqm} we get 
\begin{equation}
\label{E:estimationqm}
\exists C_{m,n}, \exists h_{0}>0, \forall h\in (0,h_{0}), \quad \| \left(\hgg_{m}(h)-E_{m,n}(h)\right)\hgv_{m,n}(\cdot,h)\|_{L^2(I_{h})} \leq C_{m,n}h^{3/2}. 
\end{equation}
Moreover we have 
\begin{align*}
\| \hgv_{m,n}(\cdot,h)\|_{L^2(I_{h})} &= \| f_{m,n}(\cdot,h)\|_{L^2(\R)}+\cO(h^{\infty})
\\
&=
\| f_{0,n}\|_{L^2(\R)}+\cO(h^{1/2})
\\
&=1+\cO(h^{1/2})
\end{align*}
where the above estimates depends on $(m,n)$. Since $\gg_{m}(h)$ is unitarily equivalent to $h\hgg_{m}(h)$, $\mu_{m,n}(h)/h$ is the $n$-th eigenvalue of $\hgg_{m}(h)$ and the spectral theorem applied to \eqref{E:estimationqm} shows that 
\begin{equation}
\label{E:upperboundcanonique}
\exists C_{m,n}, \exists h_{0}>0, \quad \frac{\mu_{m,n}(h)}{h} \leq E_{m,n}(h)+C_{m,n}h^{3/2}
\end{equation}
and we have proved the upper bound of Proposition \ref{P:asymptoticvp}.

\paragraph{Arguments for the lower bound}
The complete procedure for the proof of the lower bound of the eigenvalues of $\hgg_{m}(h)$ using the harmonic approximation can be found in \cite[Chapter 4]{DiSj99} or \cite[Chapter 3]{He88}. We recall here the main arguments. Let 
$$\widehat{\Phi}_{0}(t,h):= (1+\sqrt{h}t)\log(1+\sqrt{h}t)-\sqrt{h}t $$
be the distance of Agmon in the $t$-variable, the estimates provided in \eqref{E:Agmmeg0} becomes: 
$$\forall \beta\in (0,1),\quad  \|e^{\beta\frac{\widehat{\Phi}_{0}}{h}} \hgu_{m,n}(\cdot,h) \|_{L^2(I_{h})} \leq C(E,\beta) $$
where $\hgu_{m,n}(\cdot,h)$ is the $n$-th eigenvector associated to $\hgg_{m}(h)$. Therefore there holds \it a priori \rm estimates on the eigenfunctions proving that they concentrate near $t=0$ when $h$ tends to 0. Using a Grushin procedure (see \cite{Gru73}), these eigenfunctions are used as quasi-modes for the first order approximation $L_{0}$ and this provides a rough lower bound on the eigenvalues $\frac{\mu_{m,n}(h)}{h}$ of $\hgg_{m}(h)$ by the eigenvalues of $L_{0}$ that are the Landau levels, modulo some remainders. Combining this with \eqref{E:upperboundcanonique}, we get that there are gaps in the spectrum of $\hgg_{m}(h)$ and the spectral theorem applied to \eqref{E:estimationqm} proved the lower bound on $\frac{\mu_{m,n}(h)}{h}$ and therefore the lower bound of Proposition \ref{P:asymptoticvp}.



\begin{table}[ht!]
\begin{center}
{\small
\renewcommand{\arraystretch}{1.5}
\begin{tabular}{| p{1.3cm}|p{5.9cm}|p{1.9cm}|p{0.8cm}|p{4cm}|}
  \hline
  Notation & Operator & Space & Form  & Eigenpairs  \\
  \hline
  \hline
$H_{\bA}$ & $(-i\nabla-\bA)^2$ & $L^2(\R^3)$ & \hrulefill  & spectrum$=\R_{+}$
\\
\hline
$g_{m}(k)$ & $-\frac{1}{r}\partial_{r}r\partial_{r}+\frac{m^2}{r^2}+(\log r-k)^2$ & $L^2(\R_{+}, r \rd r)$ & $q_{m}^{k}$ &  $(\lambda_{m,n}(k),u_{m,n}(r,k))$
\\
\hline
$\tg_{m}(k)$ & $-\partial_{r}^2+\frac{m^2-\frac{1}{4}}{r^2}+(\log r-k)^2$ & $L^2(\R_{+}, \rd r)$ & $\tq_{m}^{k}$ &  $(\lambda_{m,n}(k),\tu_{m,n}(r,k))$
\\
\hline
$\gg_{m}(h)$ & $-h^2\frac{1}{\rho}\partial_{\rho}\rho\partial_{\rho}+h^2\frac{m^2}{\rho^2}+(\log \rho)^2$ & $L^2(\R_{+}, \rho \rd \rho)$ & $\gq_{m}^{h}$ &  $(\mu_{m,n}(h),\gu_{m,n}(\rho,h))$
\\
\hline
$\tgg_{m}(h)$ & $-h^2\partial_{\rho}^2+h^2\frac{m^2-\frac{1}{4}}{\rho^2}+(\log \rho)^2$ & $L^2(\R_{+}, \rd \rho)$ & $\tgq_{m}^{h}$ &  $(\mu_{m,n}(h),\tgu_{m,n}(\rho,h))$
\\
\hline
$\hgg_{m}(h)$ & $-\partial_{t}^2+h\frac{m^2-\frac{1}{4}}{(1+h^{1/2}t)^2}+(\log (1+h^{1/2}t))^2$ & $L^2(I_{h}, \rd t)$ & $\hgq_{m}^{h}$ &  $(h^{-1}\mu_{m,n}(h),\hgu_{m,n}(t,h))$
\\
\hline
\end{tabular}}
\end{center}
\caption{\label{T:operators} Operators and notations. Remind that  $\rho=hr$ with $r=\sqrt{x^2+y^2}$, $h=e^{-k}$ and $I_{h}=(-h^{-1/2},+\infty)$.}
\end{table}

\subsection{Numerical approximation of the band functions}
We use the finite element library Melina (\cite{Melina++}) to compute numerical approximations of the band functions $\lambda_{m,n}(k)$ with $0\leq m \leq 2$ and $1 \leq n \leq 4$. For $k\in [-2,6]$), the computations are made on the interval $[0,L]$ with $L$ large enough and an articifial Dirichlet boundary condition at $r=L$. According to the decay of the eigenfunctions provided by the Agmon estimates we have chosen $L=2e^{6}$ so that the region $\{r\sim e^{k} \}$ where are localized the associated eigenfunction is included in the computation domain.

On Figure \ref{F1} we have plot the numerical approximation of $\lambda_{m,n}(k)$ for the range of parameters described above. According to the theory, they all decrease from $+\infty$ toward 0. Notice that the band functions may cross for different values of $m$.

 On figure \ref{F2} we have zoomed on the lowest energies $\lambda <<1$ and we have also plotted the first order asymptotics $k\mapsto (2n-1)e^{-k}$. We see that for set $1 \leqÊn \leq 4 $, the band functions $\lambda_{m,n}(k)_{0 \leq m \leq 2}$ cluster around the first order asymptotic $(2n-1)e^{-k}$ according to Theorem \ref{thm1}.

\begin{figure}[ht]
\begin{center}
\includegraphics[keepaspectratio=true,width=15cm]{./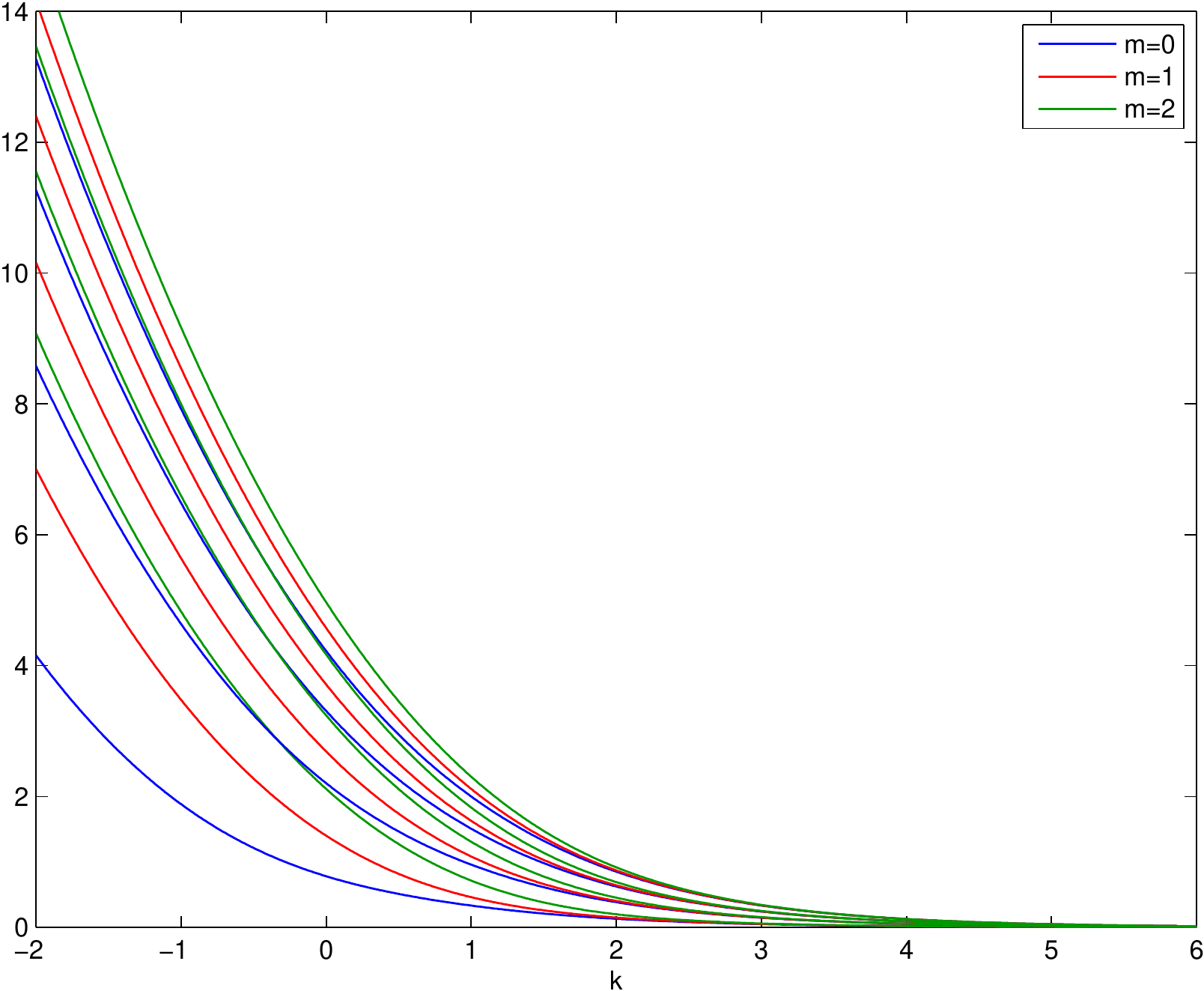}
\caption{The band functions $\lambda_{m,n}(k)$ for $0 \leq m \leq 2$ and $1 \leq n \leq 4$ and $k\in [-2,6]$.}
\label{F1}
\end{center}
\end{figure}

\begin{figure}[ht]
\begin{center}
\includegraphics[keepaspectratio=true,width=15cm]{./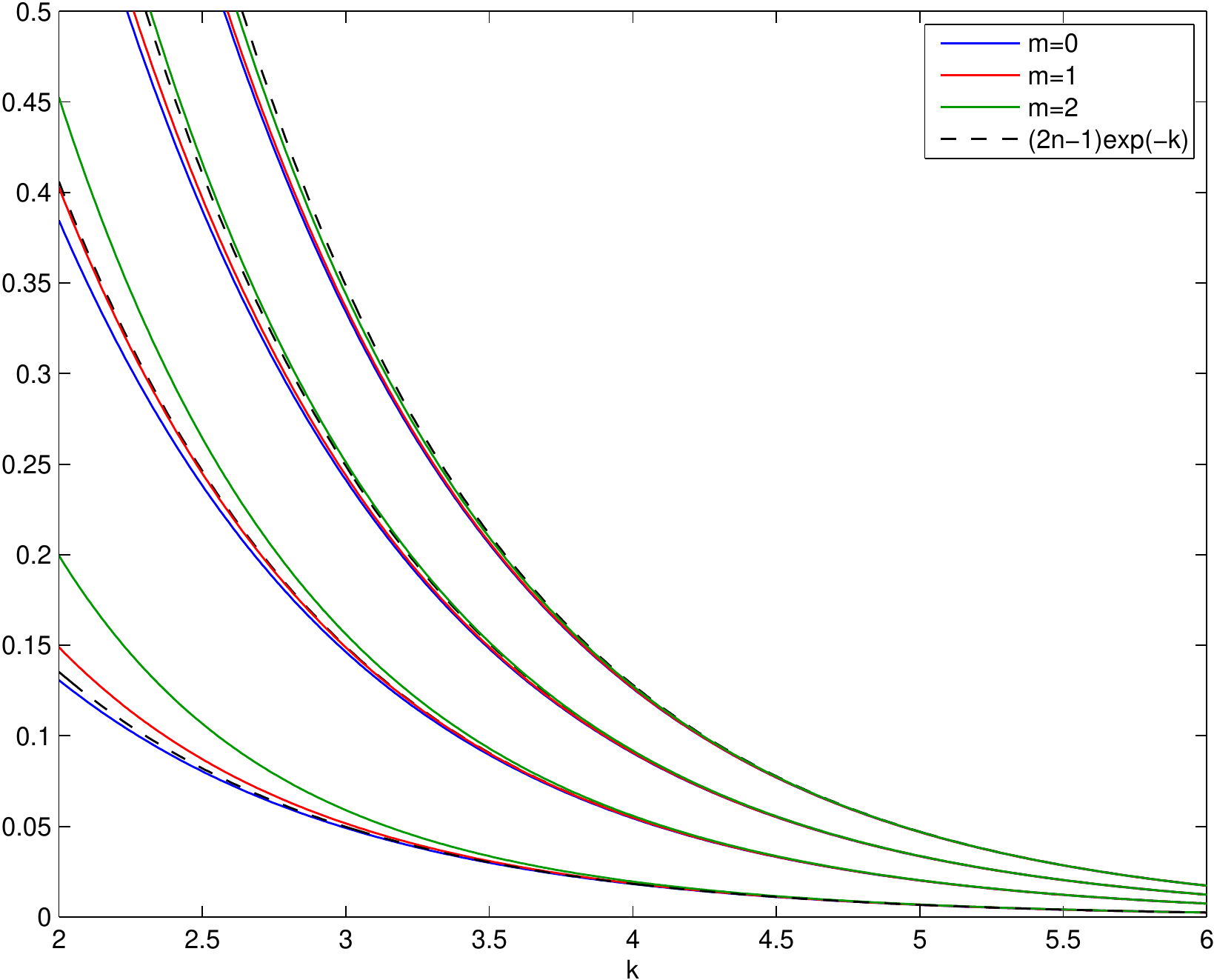}
\caption{Zoom on the lowest energies compared with the first order asymptotics $(2n-1)e^{-k}$. Each cluster corresponds to an energy level $n$.}
\label{F2}
\end{center}
\end{figure}


\section{Construction of quasi-modes and infiniteness of negative eigenvalues}\label{S:3}

In this section we prove Theorem \ref{thm2} giving infinitely many eigenvalues below $0$ for a slowly decreasing perturbation.

First, we consider $V$ depending only on $(r,z)$ and \Bk we construct quasi-modes which allow to reduce the existence of infinitely many negatives eigenvalues  to the existence of sufficiently small eigenvalues of some 1D-effective problems $D^2_z-V_{m,n}$. Then, we study the effective potential $V_{m,n}$ and conclude the proof of Theorem \ref{thm2}.

\subsection{Quasi-modes}
We construct quasi-modes for the perturbed operator $H_{\bA}-V$ where $V$ is axisymmetrical. Let 
$$\psi_{m,n}(r,\phi,z,k):=e^{im\phi}e^{ikz}u_{m,n}(r,k)f(z)$$
where $f\in L^2(\R)$, $(m,n,k)$ will be chosen later and $u_{m,n}(r,k)$ is a normalized eigenfunction of $g_{m}(k)$ associated with $\lambda_{m,n}(k)$. We have:
\begin{lem}\label{lemEffective}
For any $\epsilon>0$, 
\bel{InegEffective}
\langle  (H_{\bA}-V)\psi_{m,n},\psi_{m,n} \rangle
\leq (1+\epsilon) \lambda_{m,n}(k) \|f\|_{L^2(\R)}^2
+(1+\epsilon^{-1})\parallel D_{z}f\parallel^2_{L^2(\R)}- \langle  V_{m,n}(.,k)f ,f \rangle_{L^2(\R)}
\ee
with 
\bel{DefVmn}
V_{m,n}(z,k):=\int_{r}|\tu_{m,n}(r,k)|^2V(r,z)\rd r; \qquad\tu_{m,n}(r,k):=\sqrt{r}\,u_{m,n}(r,k). 
\ee
\end{lem}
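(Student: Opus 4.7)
The plan is to compute $H_{\bA}\psi_{m,n}$ explicitly in cylindrical coordinates and then estimate the resulting scalar product using only one nontrivial step, a Young-type inequality tied to the quadratic form $q_m^k$. First, since $-\partial_\phi^2(e^{im\phi})=m^2e^{im\phi}$, and since
\[
(\log r - D_z)^2\bigl(e^{ikz}f(z)\bigr) = e^{ikz}\bigl[(\log r-k)^2 f - 2(\log r-k)D_z f + D_z^2 f\bigr],
\]
grouping the radial contributions together with $(\log r-k)^2$ makes the fiber operator $g_m(k)$ appear acting on $u_{m,n}(\cdot,k)$, so the eigenvalue equation $g_m(k)u_{m,n}=\lambda_{m,n}(k)u_{m,n}$ yields
\[
H_{\bA}\psi_{m,n} = e^{im\phi}e^{ikz}\bigl[\lambda_{m,n}(k)u_{m,n}f - 2u_{m,n}(\log r-k)D_z f + u_{m,n}D_z^2 f\bigr].
\]

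Next I would form the scalar product in $L^2(\R_+\times(0,2\pi)\times\R,\,r\,\rd r\,\rd\phi\,\rd z)$. The phases cancel against their conjugates and an integration by parts in $z$ turns $D_z^2 f\,\bar f$ into $|D_z f|^2$; the $L^2_r$-normalization of $u_{m,n}$ eliminates the radial integral of $|u_{m,n}|^2 r$. Apart from the harmless $2\pi$ coming from the $\phi$-integration, this leaves the diagonal contribution $\lambda_{m,n}(k)\|f\|_{L^2(\R)}^2+\|D_z f\|_{L^2(\R)}^2$ together with the cross term
\[
-2\int_{\R_+}\int_{\R}|u_{m,n}(r,k)|^2(\log r-k)\,D_z f(z)\,\overline{f(z)}\,r\,\rd r\,\rd z,
\]
which factorises and is automatically real.

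The crux is the cross term. I would apply Young's inequality with the $(\log r-k)$ factor paired with $f$ rather than with $D_z f$: for any $\epsilon>0$,
\[
2|\log r-k|\,|D_z f|\,|f| \leq \epsilon(\log r-k)^2|f|^2 + \epsilon^{-1}|D_z f|^2.
\]
Integrating against $|u_{m,n}|^2 r\,\rd r\,\rd z$ and using the elementary bound
\[
\int_{\R_+}|u_{m,n}(r,k)|^2(\log r-k)^2\,r\,\rd r \leq q_m^k\bigl(u_{m,n}(\cdot,k)\bigr) = \lambda_{m,n}(k)
\]
(valid because the radial kinetic and centrifugal parts of $q_m^k$ are nonnegative) controls the cross term by $\epsilon\lambda_{m,n}(k)\|f\|^2+\epsilon^{-1}\|D_z f\|^2$. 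Summing produces the announced $(1+\epsilon)\lambda_{m,n}(k)\|f\|^2+(1+\epsilon^{-1})\|D_z f\|^2$ upper bound for the kinetic part.

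For the potential contribution, axisymmetry of $V$ makes the $\phi$-integration trivial, so
\[
\langle V\psi_{m,n},\psi_{m,n}\rangle = \int_{\R}\biggl(\int_{\R_+}V(r,z)|u_{m,n}(r,k)|^2 r\,\rd r\biggr)|f(z)|^2\,\rd z = \langle V_{m,n}(\cdot,k)f,f\rangle_{L^2(\R)},
\]
after recognising $|\tu_{m,n}(r,k)|^2=r|u_{m,n}(r,k)|^2$ in the definition \eqref{DefVmn}. Subtracting this from the kinetic bound gives \eqref{InegEffective}. The only genuinely delicate step is the orientation of Young's inequality in the cross term: the $(\log r-k)$ factor must be paired with $|f|$, so that it can be reabsorbed into $q_m^k$, rather than with $|D_z f|$, which would leave an unbounded radial weight.
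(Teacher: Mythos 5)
Your proof is correct and follows essentially the same route as the paper: explicit computation of $H_{\bA}\psi_{m,n}$, the eigenvalue equation for $g_m(k)$, Young's inequality on the cross term with the $(\log r - k)$ factor paired with $|f|$, and the quadratic-form inequality $\int |u_{m,n}|^2(\log r-k)^2\, r\,\rd r \leq q_m^k(u_{m,n}) = \lambda_{m,n}(k)$ (which is the paper's $(\log r-k)^2 \leq g_m(k)$ used against the eigenfunction). The only cosmetic difference is a sign on the cross term in the paper's display, which is immaterial once absolute values are taken.
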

\begin{proof}
We have
\begin{multline}
H_{\bA}\psi_{m,n}(r,\phi,z,k)=e^{im\phi}e^{ikz}f(z)g_{m}(k)u_{m,n}(r,k)\\
+
e^{im\phi}e^{ikz}u_{m,n}(r,k)\left(D_{z}^2f+2(\log r-k)D_{z}f(z)\right) \ ,
\end{multline}
that is 
\begin{multline}
(H_{\bA}-V)\psi_{m,n}(r,\phi,z,k)=\lambda_{m,n}(k)\psi_{m,n}(r,\phi,z,k)\\
+
e^{im\phi}e^{ikz}u_{m,n}(r,k)\left(D_{z}^2f+2(\log r-k)D_{z}f(z)-V(r,z)f(z)\right) \ . 
\end{multline}
\begin{multline}
 (H_{\bA}-V)\psi_{m,n}\cdot\overline{\psi_{m,n}} =\lambda_{m,n}(k)u_{m,n}(r,k)^2f(z)^2+
\\ u_{m,n}(r,k)^2\Big(D_{z}^2f(z)+2(\log r -k)D_{z}f(z)-V(r,z)f(z)\Big)\overline{f(z)}.
\end{multline}
Integrating over $(r,z)$ in the weighted space $(\R_+\times \R, r \rd r \rd z)$ we get
\begin{multline}\label{eq4.6}
\langle  (H_{\bA}-V)\psi_{m,n},\psi_{m,n} \rangle_{L^2(\R_+\times \R,r \rd r \rd z)}=\lambda_{m,n}(k)\|f\|_{L^2(\R)}^2
\\
+\|D_{z}f\|^2+2\int_{r,z}(\log r-k)| u_{m,n}(r,k)|^2 D_{z}f(z) \overline{f(z)} r \rd r \rd z -\int_{z}V_{m,n}(z,k)|f(z)|^2 \rd z.
\end{multline}
Then, using that for any $\epsilon >0$, 
$$| 2 (\log r-k) D_{z}f(z) \overline{f(z)}| \leq \epsilon (\log r-k)^2  |f(z)|^2 + \epsilon^{-1} |D_z f |^2,$$
we deduce,
$$\langle  (H_{\bA}-V)\psi_{m,n},\psi_{m,n} \rangle
\leq  \lambda_{m,n}(k) \|f\|_{L^2(\R)}^2
+(1+\epsilon^{-1})\parallel D_{z}f\parallel^2_{L^2(\R)} $$
$$+ \epsilon \int_{r,z} (\log r-k)^2 |u_{m,n}(r,k)|^2 |f(z)|^2 r \rd r \rd z 
- \langle  V_{m,n}(.,k)f ,f \rangle_{L^2(\R)} .
$$
Since in the sense of quadratic form in $L^2(\R_+\times \R,r \rd r \rd z)$, we have $(\log r-k)^2 \leq g_{m}(k)$, we obtain \eqref{InegEffective} using again that $g_{m}(k) u_{m,n}(r,k)= \lambda_{m,n}(k) u_{m,n}(r,k) $.
\end{proof}
\begin{rem}
According to the Feynman-Hellmann formula, the third term in the right hand side of \eqref{eq4.6} is related to the derivative of $\lambda_{m,n}(k)$:
$$\lambda'_{m,n}(k)= -2\int_{r,z}(\log r-k)| u_{m,n}(r,k)|^2  r \rd r .$$
This quantity could be studied more carefully as in \cite{HisPopSoc13} where it is done for another fibered operator, but here, we need only some rough estimates.
\end{rem}
\subsection{Estimate on the reduced potential}\label{sSecEffective}
We are looking at the asymptotic behavior of the 1D potential $z\mapsto V_{m,n}(z,k)$ by using the localization properties of the eigenfunctions $\tu_{m,n}(\cdot,k)$ when $k$ goes to $+\infty$. In this section all the Landau's notations refers to an asymptotic behavior when $k$ goes to $+\infty$. Set $(m,n)\in \Z \times \N^{*}$, $C_{m,n}>2n-1$ and choose $k$ large enough such that $\lambda_{m,n}(k) \leq C_{m,n}e^{-k}$ (see Theorem \ref{thm1}). Write that 
$\R=I_{k}\cup \complement I_{k}$ with $I_{k}=[e^{k}-a(k),e^{k}+a(k)]$ and $a(k)=o(e^{k})$ will be chosen later. We use \eqref{E:agmonnormeL2} with $E=0$:
$$\int_{\complement I_{k}}|\tu_{m,n}(r,k)|^2  \rd r\leq C(0,\beta)\sup_{r\in \complement I_{k}}e^{-\beta\Phi_{0}(r,k)} $$
where the Agmon distance $\Phi_{0}$ is defined in \eqref{D:PhiAg}. \Bk Since $\Phi_{0}(\cdot,k)$ is decreasing on $(0,e^{k})$ and increasing on $(e^{k},+\infty)$ we have 
$$\inf_{\complement I_{k}}\Phi_{0}(\cdot,k)=\min(\Phi_{0}(e^{k}\pm a(k)) \, .$$
An asymptotic expansion at these points provides 
$$\Phi_{0}(e^{k}\pm a(k),k) \underset{k\to +\infty}{=}\frac{1}{2}a^2(k)e^{-k}+O(a(k)^3e^{-2k}) \, . $$ 
Assume that 
\begin{equation}
\label{E:choixa(k)}
\lim_{k\to +\infty} a^2(k)e^{-k}= +\infty \ \ \mbox{and} \ \ \lim_{k\to +\infty} a^3(k)e^{-2k}=0
\end{equation} then we have 
$$e^{-\beta\Phi_{0}(e^{k}\pm a(k),k)}\underset{k\to +\infty}{\sim} e^{-\frac{\beta}{2}a(k)^2e^{-k}}$$
The condition \eqref{E:choixa(k)} is valid for any $a(k)$ satisfying 
$$e^{\frac{k}{2}} << a(k) << e^{\frac{2k}{3}} $$
and for such an $a(k)$ we get 
\begin{equation}
\label{E:estimationsup}
\sup_{r\in \complement I_{k}}e^{-\beta\Phi_{0}(r,k)} \underset{k\to +\infty}{\sim} e^{-\frac{\beta}{2} a(k)^2e^{-k}}\, . 
\end{equation}
We have 
\begin{align*}
V_{m,n}(z,k) &\geq \inf_{r \in I_{k}}V(r,z)\int_{I_{k}}|\tu_{m,n}(r,k)|^2 \rd r
\\
&\geq  \inf_{r \in I_{k}}V(r,z)(1-C(0,\beta)\sup_{r\in \complement I_{k}}e^{-{\beta}\Phi_{0}(r,k)})
\end{align*}
where we have used $\|\tu_{m,n}(\cdot,k)\|_{L^2(\R_{+})}=1$.

Set $\beta\in (0,1)$ once for all. Choose $\epsilon>0$. Then we deduce from the choice of $a(k)$ in \eqref{E:choixa(k)} and \eqref{E:estimationsup} that there exists $k_{0}$ that depends \it a priori \rm of $(m,n)$ such that 
\bel{MinVmn}
\forall k\geq k_{0}, \forall z\in \R, \quad V_{m,n}(z,k) \geq (1-\epsilon)\inf_{r \in I_{k}}V(r,z)
\ee

\subsection{Proof of Theorem \ref{thm2}} According to the min-max principle, since $V$ satisfies \eqref{hypinfinite}, it is sufficient to prove the infinity of the negative eigenvalues for the axisymmetric potential $V(r,z)=\langle r \rangle^{-\alpha} v_\perp(z)$. \Bk  Let us denote $H^m_{\bA}$ the restriction of $H_{\bA}$ to $e^{im\phi}L^2(\R_{+}\times \R, r \rd r\rd z) $. 
For $V$ axisymmetric, $H_{\bA}-V$ is unitarily equivalent to $\oplus_{m\in \Z} (H^m_{\bA}-V)$, then $H_{\bA}-V$ has infinitely many negative eigenvalues provided that
\begin{itemize}
\item
Either $H^m_{\bA}-V$ has at least one's for all $m\in \Z$,
\item or there exists $m\in \Z$ such that $H^m_{\bA}-V$ has infinitely many negative eigenvalues.
\end{itemize}
Thanks to the min-max principle, Lemma \ref{lemEffective}, implies that for each $m \in \Z$ the number of negative eigenvalues of  $H^m_{\bA}-V$ is at least the number of eigenvalues of $(1+\epsilon^{-1})D^2_z -V_{m,n}(.,k)$ below $- (1+\epsilon) \lambda_{m,n}(k)$, that is the number of eigenvalues of $D^2_z -\frac{\epsilon}{1+\epsilon}V_{m,n}(.,k)$ below $- \epsilon \lambda_{m,n}(k)$. 

For $V(r,z)=\langle r \rangle^{-\alpha} v_\perp(z)$, \Bk the inequality \eqref{MinVmn} implies:
$$\forall k\geq k_{0}, \forall z\in \R, \quad V_{m,n}(z,k) \geq  C e^{-\alpha k} v_\perp(z),$$
and choosing $k$ large enough such that $\lambda_{m,n}(k) \leq C_{m,n}e^{-k}$, we deduce that the number of negative eigenvalues of  $H^m_{\bA}-V$ is at least the number of eigenvalues of 
$$D^2_z -\tfrac{C\epsilon}{1+\epsilon}e^{-\alpha k}v_\perp$$ below $-\epsilon C_{m,n}e^{-k}$.
Then Theorem \ref{thm2} follows by applying the following lemmas (Lemma \ref{lem1D} and Lemma \ref{lem1Dbis}), for $ k$ sufficiently large with $\Lambda (k)= e^{-\alpha k}$, $v=\frac{C\epsilon}{1+\epsilon}v_\perp$ and $\lambda(k)=\epsilon C_{m,n}e^{-k}$.\Bk

\subsection{Lemmas on negative eigenvalues for a family of some 1D Schr\"odinger operators.}
\begin{lem}\label{lem1D}
Let $h(k)= D_z^2 - \Lambda(k) v(z)$ on $\R$, $k \in \R$ with:
$$ \qquad v \in L^1(\R); \qquad \int_\R v(z)\rd z >0, \qquad \Lambda(k)> 0.$$

Let $\lambda(k)$ be a positive function of $k \in \R$ such that 
\bel{hyplambda}
\lim_{k \rightarrow + \infty}\lambda(k) = 0;
\qquad \lim_{k \rightarrow + \infty}\frac{\lambda(k)}{\Lambda(k)^2}=0.
\ee

Then, for $k$ sufficiently large, $h(k)+\lambda(k)$ has at least one negative eigenvalue.

\end{lem}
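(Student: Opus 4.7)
I would argue by the min-max principle: it suffices to exhibit, for each $k$ large, a trial function $f_k\in H^1(\R)$ such that
$$\langle h(k)f_k,f_k\rangle<-\lambda(k)\,\|f_k\|^2_{L^2(\R)},$$
because then the bottom of the spectrum of $h(k)+\lambda(k)$ is negative, whereas its essential spectrum lies in $[\lambda(k),+\infty)\subset\R_+$ (since $\Lambda(k)v$ is relatively compact with respect to $D_z^2$), producing the desired discrete negative eigenvalue. To construct $f_k$ I would fix once for all $\phi\in C_c^\infty(\R)$ with $\phi(0)=1$ and set $f_\delta(z):=\phi(\delta z)$ for a scaling parameter $\delta>0$ to be chosen in terms of $k$. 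A direct computation gives $\|f_\delta\|^2=\delta^{-1}\|\phi\|^2$, $\|f_\delta'\|^2=\delta\|\phi'\|^2$, and the Rayleigh quotient
$$R(\delta,k)=\frac{\delta^2\|\phi'\|^2-\delta\,\Lambda(k)\,J(\delta)}{\|\phi\|^2},\qquad J(\delta):=\int_\R v(z)|\phi(\delta z)|^2\,dz.$$
Since $|\phi(\delta z)|^2\to \phi(0)^2=1$ pointwise and is dominated by $\|\phi\|_\infty^2\mathbf{1}_{\supp\phi(\delta\cdot)}\cdot \|\phi\|_\infty^2$, dominated convergence yields $J(\delta)\to\int_\R v>0$ as $\delta\to 0^+$. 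Fix $\delta_0>0$ with $J(\delta)\geq c_0:=\tfrac12\int_\R v$ for all $\delta\in(0,\delta_0]$.

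Next I would calibrate $\delta$ against $\Lambda(k)$ through the unified choice $\delta(k):=\min\!\bigl(\delta_0,\alpha\Lambda(k)\bigr)$ with $\alpha:=c_0/(2\|\phi'\|^2)$. Two regimes then arise. When $\alpha\Lambda(k)\leq \delta_0$ (the ``small $\Lambda$'' regime), substituting $\delta=\alpha\Lambda(k)$ gives
$$R(\delta(k),k)\leq \frac{\Lambda(k)^2\bigl(\alpha^2\|\phi'\|^2-\alpha c_0\bigr)}{\|\phi\|^2}=-\frac{\alpha c_0}{2\|\phi\|^2}\Lambda(k)^2,$$
and $-\lambda(k)$ is beaten for $k$ large by the hypothesis $\lambda(k)/\Lambda(k)^2\to 0$. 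When $\alpha\Lambda(k)>\delta_0$ (the ``large $\Lambda$'' regime), substituting $\delta=\delta_0$ and using $\delta_0\|\phi'\|^2\leq \tfrac12 c_0\Lambda(k)$ gives
$$R(\delta(k),k)\leq -\frac{\delta_0 c_0}{2\|\phi\|^2}\Lambda(k),$$
and $-\lambda(k)$ is beaten since $\Lambda(k)\geq \delta_0/\alpha>0$ is bounded below and $\lambda(k)\to 0$. In both cases $R(\delta(k),k)<-\lambda(k)$ for $k$ sufficiently large, and the min-max principle concludes.

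\textbf{Expected main obstacle.} The subtlety lies in reconciling the two asymptotic behaviours that $\Lambda(k)$ may display: the natural weak-coupling scaling $\delta\sim\Lambda$, which yields the sharp $-\Lambda^2$ decay of the ground state energy of a 1D Schr\"odinger operator with an $L^1$ attractive potential, fails when $\Lambda(k)$ stays large, for then $\delta$ would leave the range where the concentration lemma $J(\delta)\simeq\int v$ applies. The truncation $\delta(k)=\min(\delta_0,\alpha\Lambda(k))$ handles both regimes simultaneously, each of the two hypotheses on $\lambda$ being used in exactly one of them. No genuinely deep difficulty is expected beyond this book-keeping.
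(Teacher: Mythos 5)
Your argument is correct and follows essentially the same route as the paper: a spread-out trial function whose scaling parameter is calibrated against $\Lambda(k)$, exploiting the weak-coupling behaviour of a $1$D Schr\"odinger operator with an $L^1$ potential of positive mean. The only cosmetic differences are your use of a compactly supported cut-off $\phi(\delta z)$ with the cap $\delta(k)=\min(\delta_0,\alpha\Lambda(k))$ versus the paper's exponential $a(k)^{1/2}e^{-a(k)|z|}$ with $a(k)$ chosen just above the smaller root of the quadratic $X^2-C\Lambda(k)X+\lambda(k)$, which handles both regimes in one stroke.
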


\begin{proof}
Let us introduce the $L^2-$normalized function
$$ v_k(z):= a(k)^{\frac12}e^{-a(k)| z |}$$
with $a(k)$ satisfying $\lim_{k \rightarrow + \infty}a(k) = 0$ and to be chosen. 
We use $v_k(z)$ as a quasi-mode:
$$\langle h(k) v_k, v_k \rangle = a(k)^2 - \Lambda(k) a(k) \int_\R v(z) e^{-2a(k)| z |}\rd z.$$
Since 
$$\lim_{k \rightarrow + \infty}  \int_\R v(z) e^{-2a(k)| z |}\rd z =  \int_\R v(z) \rd z>0,$$
for $k$ sufficiently large, there exists $C>0$ such that:
$$\langle h(k) v_k, v_k \rangle \leq  a(k)^2 - C \Lambda(k) a(k).$$

By using the min-max principle, it remains to chose $a(k) $ such that $a(k)^2 - C \Lambda(k) a(k) < - \lambda(k)$.
 Under the assumption \eqref{hyplambda}, the polynomial $X^2 - C \Lambda(k) X + \lambda(k)$ has two real roots
$ a_+(k)>a_-(k)>0$ with $a_{-}(k)\leq \frac{2 \lambda(k)}{C \Lambda(k)}$ tending to $0$ as $k$ tends to infinity, see \eqref{hyplambda}. Then, there exists $a(k) $ such that, for $k$ sufficiently large,
$$\langle h(k) v_k, v_k  \rangle <  - \lambda(k),$$
and Lemma \eqref{lem1D} holds.
\end{proof}

\begin{lem}\label{lem1Dbis}
Let $h(k)= D_z^2 - V_k$ on $\R$, $k \in \R$ with $V_k$ satisfying:
$$V_k(z) \geq \Lambda(k) \langle z \rangle^{-\gamma}; \qquad  \gamma \in (0,2); \qquad \Lambda(k)\in (0,1).$$

Let $\lambda(k)$ be a positive function of $k \in \R$ such that 
\bel{hyplambdabis}
\qquad \lim_{k \rightarrow + \infty}\frac{\lambda(k)}{\Lambda(k)^{\frac{2}{2-\gamma}}}=0.
\ee

Then, for $k$ sufficiently large, $h(k)+\lambda(k)$ has at least one negative eigenvalue and the number of negative eigenvalues tends to infinity, as $k$ tends to infinity.

\end{lem}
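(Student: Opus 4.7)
The plan is to reduce the problem to a scale-invariant model operator by a dilation and then invoke a standard one-dimensional test-function construction.

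First, the pointwise lower bound on $V_k$ gives $h(k)\le \tilde h(k):=D_z^2-\Lambda(k)\langle z\rangle^{-\gamma}$ as quadratic forms, so by the min-max principle it suffices to prove the conclusion for $\tilde h(k)$. I would then introduce the scaling parameter $\tau:=\Lambda(k)^{-1/(2-\gamma)}\ge 1$ (chosen so that the kinetic and potential pieces are of the same order) together with the associated $L^2$-unitary $(U_\tau v)(z):=\tau^{-1/2}v(z/\tau)$, which conjugates $\tilde h(k)$ to $\tau^{-2}D_y^2-\Lambda(k)\langle\tau y\rangle^{-\gamma}$ on $L^2(\R,dy)$. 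The elementary inequality $\langle\tau y\rangle\le\tau\langle y\rangle$ (valid because $\tau\ge 1$) combined with the identity $\tau^{-2}=\tau^{-\gamma}\Lambda(k)=\Lambda(k)^{2/(2-\gamma)}$ yields, as quadratic forms,
\[
U_\tau^*\tilde h(k)U_\tau\;\le\;\Lambda(k)^{2/(2-\gamma)}H_0,\qquad H_0:=D_y^2-\langle y\rangle^{-\gamma}.
\]
Hence by min-max, the number of eigenvalues of $h(k)$ lying below $-\lambda(k)$ is at least the number of eigenvalues of $H_0$ lying below $-\lambda(k)/\Lambda(k)^{2/(2-\gamma)}$.

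The next step is to show that $H_0$ has infinitely many negative eigenvalues accumulating at $0$. I would fix a nontrivial cut-off $\eta\in C_c^\infty([0,1])$ and, on the dyadic intervals $I_j:=[2^j,2^{j+1}]$, set $\phi_j(y):=\eta((y-2^j)/2^j)$. A direct computation gives $\|\phi_j\|^2\asymp 2^j$, $\|\phi_j'\|^2\asymp 2^{-j}$ and $\int\langle y\rangle^{-\gamma}|\phi_j|^2\,dy\gtrsim 2^{j(1-\gamma)}$. Because $\gamma<2$ the potential contribution dominates for large $j$, so the Rayleigh quotient obeys $\langle H_0\phi_j,\phi_j\rangle/\|\phi_j\|^2\le -c\,2^{-j\gamma}$. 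The $\phi_j$ being pairwise disjointly supported, any $N$ of them span an $N$-dimensional subspace of the form domain of $H_0$ on which the form is bounded above by $-\mu_N$ for a suitable $\mu_N>0$; by min-max, $H_0$ has at least $N$ eigenvalues strictly below $-\mu_N/2$.

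Combining the two reductions, for any prescribed $N$ assumption \eqref{hyplambdabis} forces $\lambda(k)<(\mu_N/2)\,\Lambda(k)^{2/(2-\gamma)}$ for $k$ large enough, so the $N$ eigenvalues produced above all lie below $-\lambda(k)$. Taking $N=1$ yields the existence of at least one negative eigenvalue of $h(k)+\lambda(k)$, and letting $N$ be arbitrary gives the divergence of the counting function as $k\to\infty$. The only genuinely delicate point is the rescaling step: since $\langle\cdot\rangle$ is not homogeneous, one must use the bound $\langle\tau y\rangle\le\tau\langle y\rangle$ to absorb the mismatch without degrading the critical exponent $2/(2-\gamma)$; the remaining ingredients are classical manipulations for one-dimensional Schr\"odinger operators.
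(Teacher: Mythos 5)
Your rescaling step mirrors the paper's: both you and the authors conjugate by the dilation $y=\Lambda(k)^{1/(2-\gamma)}z$ and reduce the problem to the model operator $H_0=D_y^2-\langle y\rangle^{-\gamma}$, after which the hypothesis \eqref{hyplambdabis} turns the spectral threshold $-\lambda(k)$ into the threshold $-\lambda(k)/\Lambda(k)^{2/(2-\gamma)}\to 0$ for $H_0$. The small cosmetic difference in the comparison inequality is immaterial (you use $\langle\tau y\rangle\le\tau\langle y\rangle$; the paper writes $\langle\tau y\rangle^{-\gamma}=\tau^{-\gamma}(\tau^{-2}+y^2)^{-\gamma/2}\ge\tau^{-\gamma}(1+y^2)^{-\gamma/2}$ since $\tau^{-2}<1$; both yield exactly $U_\tau^*\tilde h(k)U_\tau\le\Lambda(k)^{2/(2-\gamma)}H_0$).

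Where you genuinely diverge is in justifying that $H_0$ has infinitely many negative eigenvalues accumulating at zero. The paper simply cites the textbook result \cite[Theorem XIII.82]{ReSi78} for subcritical decay $\gamma<2$, whereas you reprove it from scratch with a dyadic family of disjointly supported test functions $\phi_j$ on $[2^j,2^{j+1}]$ whose Rayleigh quotients satisfy $\langle H_0\phi_j,\phi_j\rangle/\|\phi_j\|^2\lesssim -2^{-j\gamma}$. This is correct and makes the argument self-contained; the computation $\|\phi_j'\|^2\asymp 2^{-j}$ vs. $\int\langle y\rangle^{-\gamma}|\phi_j|^2\gtrsim 2^{j(1-\gamma)}$ uses $\gamma<2$ exactly where the paper's citation does, and the disjoint supports kill the off-diagonal form entries so min-max applies on the $N$-dimensional test space. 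The one point you should make explicit is that $\sigma_{\mathrm{ess}}(H_0)=[0,\infty)$ (the potential is $-\Delta$-compact), so that the min-max values you produce below zero are genuine discrete eigenvalues; this is implicitly used when you pass from Rayleigh quotients to a counting statement, and the paper's citation silently encapsulates it. With that remark added, your proof is complete and, being elementary and quantitative, arguably more informative than the citation the authors use.
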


\begin{proof}
Using the change of variable $y=\Lambda(k)^{\frac{1}{2-\gamma}} z$, it is clear that $h(k)$ is unitarily equivalent to $\Lambda(k)^{\frac{2}{2-\gamma}} \tilde{h}(k)$ with 
 $$\tilde{h}(k):= D_y^2 - \frac{1}{\Lambda(k)^{\frac{2}{2-\gamma}}}V_k\left(\frac{y}{\Lambda(k)^{\frac{1}{2-\gamma}}}\right) .$$
 By assumption on $V_k$, we have:
 $$\frac{1}{\Lambda(k)^{\frac{2}{2-\gamma}}}V_k\left(\frac{y}{\Lambda(k)^{\frac{1}{2-\gamma}}}\right)\geq (\Lambda^{\frac{2}{2-\gamma}}(k)+ y^2)^{-\frac{\gamma}{2}} \geq (1+ y^2)^{-\frac{\gamma}{2}}$$
 where we have used $\Lambda(k)\in (0,1)$. Then the min-max principle implies that the number of negative eigenvalues of $h(k)+\lambda(k)$ is larger that the number of eigenvalues of $D_y^2 - \langle y \rangle^{-\gamma}$ below $-\frac{\lambda(k)}{\Lambda^{\frac{2}{2-\gamma}}(k)}$. Since $\gamma < 2$, it is known (see \cite[Theorem XIII.82]{ReSi78}) that $D_y^2 -  \langle y \rangle^{-\gamma}$ as infinitely many negative eigenvalues and Lemma \ref{lem1Dbis} follows from \eqref{hyplambdabis}.
\end{proof}

\Bk

\section{Finite number of negative eigenvalue for perturbation by short range potential} \label{S:Finitenumber}

The aim of this section is to prove Theorem \ref{thm3}. In Section \ref{ss42}, using the Birman-Schwinger principle, we reduce the proof to the analysis of some compact operator involving the contribution of the small energies ($\lambda_{m,n}(k) \leq \nu <<1$). Exploiting that the eigenfunctions associated with $\lambda_{m,n}(k)$  are localized near $e^k$, we obtain in Section \ref{ss43} an upper bound of the counting function including interactions between the behavior in $r$ and $z$ via a convolution product and the Fourier transform w.r.t. $z$. Then, exploiting a uniform lower bound of the band functions (see Section \ref{ss41}), we are able to prove Theorem \ref{thm3} by computing the Hilbert-Schmidt norm of a canonical operator and by using standard Young inequality (see Section \ref{ss44}).
\Bk

\subsection{Uniform estimate for the one-dimensional problem} \label{ss41}
\Bk
In order to prove Theorem \ref{thm3} we need an uniform lower bound on the band functions near $0$.
\begin{lem}\label{Lem:Minoration}
Let $\nu_{0}>0$. There exists $C_{0}>0$ such that for all $(m,n,h)$ satisfying $\mu_{m,n}(h)\leq \nu_{0}$ we have 
$$ \mu_{m,n}(h) \geq C_{0}n h$$
\end{lem}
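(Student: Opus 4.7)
The plan is to reduce to $m=0$, localize via an IMS decomposition whose cut-off scale is tuned to the threshold $\nu_0$, and then invoke the min-max principle with a comparison to the full harmonic oscillator (which is valid uniformly in $n$).

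\textbf{Reduction to $m=0$.} Since $\gq_m^h(u) - \gq_0^h(u) = h^2 m^2 \int_0^\infty |u|^2 \rho^{-1}\,d\rho \geq 0$, the min-max principle yields $\mu_{m,n}(h) \geq \mu_{0,n}(h)$, so it suffices to treat $m=0$ in the weighted picture $L^2(\mathbb{R}_+, \rho\,d\rho)$ where $\gq_0^h(u) = \int_0^\infty (h^2|u'|^2 + \log^2\rho\,|u|^2)\,\rho\,d\rho$ is manifestly non-negative.

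\textbf{IMS at the scale of $\nu_0$.} Fix $M > 2$ and set $A := e^{-\sqrt{M\nu_0}}$, $B := e^{\sqrt{M\nu_0}}$, so $\log^2\rho \geq M\nu_0$ for $\rho \notin (A,B)$. Introduce a smooth partition $\chi_1^2 + \chi_2^2 = 1$ with $\chi_1 = 1$ on $[A,B]$ and $\supp\chi_1 \subset [A/2, 2B]$. Since $|\chi_j'|$ is of order one in $\rho$, the IMS formula gives $\gq_0^h(u) \geq \gq_0^h(\chi_1 u) + \gq_0^h(\chi_2 u) - C h^2\|u\|^2_{L^2_\rho}$. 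On $\supp\chi_2$ one has $\log^2\rho \geq M\nu_0$, hence $\gq_0^h(\chi_2 u) \geq M\nu_0\|\chi_2 u\|^2_{L^2_\rho}$. On $\supp\chi_1 \subset [A/2, 2B]$, a compactness argument yields $c_0 = c_0(A,B) > 0$ with $\log^2\rho \geq c_0 (\rho-1)^2$, and since $\rho$ is bounded above and below on this interval,
$$\gq_0^h(\chi_1 u) \geq \tfrac{A}{2}\,\langle L_{\mathrm{HO}}(\chi_1 u),\,\chi_1 u\rangle_{L^2(\mathbb{R})},$$
where $L_{\mathrm{HO}} := -h^2\partial_\rho^2 + c_0(\rho-1)^2$ acts on $L^2(\mathbb{R})$ and $\chi_1 u$ is extended by zero.

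\textbf{Min-max and conclusion.} Assume $\mu_{0,n}(h) \leq \nu_0$ and let $E_n$ be the span of the first $n$ eigenfunctions of $\gg_0(h)$; then $\gq_0^h(u) \leq \mu_{0,n}(h)\|u\|^2_{L^2_\rho}$ for $u \in E_n$. Combined with the $\supp\chi_2$ estimate, this forces $\|\chi_2 u\|^2_{L^2_\rho} \leq (2/M)\|u\|^2_{L^2_\rho}$ for $h$ small, so $\chi_1|_{E_n}$ is injective and $\chi_1(E_n)$ is an $n$-dimensional subspace of $L^2(\mathbb{R})$ with $\|\chi_1 u\|^2_{L^2_\rho} \geq (1-2/M)\|u\|^2_{L^2_\rho}$. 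The $\supp\chi_1$ estimate, together with the weight comparison $\|\chi_1 u\|^2_{L^2_\rho} \leq 2B\|\chi_1 u\|^2_{L^2(\mathbb{R})}$ and min-max applied to $L_{\mathrm{HO}}$ (whose $n$-th eigenvalue is $(2n-1)\sqrt{c_0}\,h$ on $L^2(\mathbb{R})$), yields
$$(2n-1)\sqrt{c_0}\,h \;=\; \mu_n(L_{\mathrm{HO}}) \;\leq\; \frac{4B}{A(1-2/M)}\,\bigl(\mu_{0,n}(h) + Ch^2\bigr).$$
Rearranging and using $2n-1 \geq n$ for $n \geq 1$ gives $\mu_{0,n}(h) \geq C_0 n h$ with $C_0 = (1-2/M)A\sqrt{c_0}/(8B) > 0$, provided $h \leq h_1(\nu_0)$. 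For $h \geq h_1(\nu_0)$, the monotonicity $h \mapsto \mu_{m,n}(h)$ and the growth $\mu_{m,n}(h) \to \infty$ as $m^2 + n \to \infty$ leave only finitely many admissible $(m,n)$, handled by taking the minimum of the positive ratios $\mu_{m,n}(h)/(nh)$.

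The principal obstacle is the uniformity in $n$: in the regime $\mu_{0,n}(h) \leq \nu_0$ one can have $n$ as large as $\sim 1/h$, well beyond the reach of the fixed-$n$ harmonic approximation of Proposition~\ref{P:asymptoticvp}, whose constant $C_{m,n}$ may degenerate. The strategy above bypasses this by comparing directly with the full harmonic oscillator on $L^2(\mathbb{R})$, for which $(2n-1)\sqrt{c_0}\,h$ is a simultaneous lower bound on $\mu_n$ valid for every $n \geq 1$.
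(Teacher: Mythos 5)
Your proof is correct, and it takes a genuinely different route from the paper's at the heart of the argument. Both proofs agree on the reduction to $m=0$, the IMS decomposition near $\rho=1$, the lower bound $\log^2\rho \geq c_0(\rho-1)^2$ on a compact interval, and the treatment of the regime $h \geq h_1$ by monotonicity and compactness. The divergence is in how the oscillator comparison is turned into a bound on $\mu_{0,n}(h)$. The paper first proves Agmon estimates (exponential decay of each eigenfunction $\tgu_{0,n}$ outside $I(\nu_0)$, uniformly in $n$ among indices with $\mu_{0,n}(h)\leq\nu_0$, as in \eqref{E:agmonendehorsI}), and then deduces that the truncated family $\{\chi_1\tgu_{0,n}\}$ is \emph{quasi-}orthogonal (\eqref{E:quasiorthogonalitetronc}) so as to feed it into min-max for a harmonic oscillator, carrying along exponentially small error terms. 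You instead bypass Agmon estimates entirely: for $u\in E_n:=\mathrm{span}$ of the first $n$ eigenfunctions, the combination of the quadratic-form bound $\gq_0^h(u)\leq\nu_0\|u\|^2$ and the pointwise bound $\log^2\rho\geq M\nu_0$ outside the inner region forces $\|\chi_2 u\|^2\leq (2/M)\|u\|^2$; with $M>2$ this makes $\chi_1$ injective on $E_n$, so $\chi_1(E_n)$ is an honest $n$-dimensional test subspace for $L_{\mathrm{HO}}$ and min-max applies without any quasi-orthogonality discussion. This is more elementary and arguably cleaner. Note, however, that the uniform Agmon estimate \eqref{E:agmonendehorsI} is not wasted effort in the paper: via Remark~\ref{Rq:agmonendehorsI} it is reused in Section~\ref{S:prthm3} to obtain \eqref{Esti1}, so the authors prove it once and use it twice, whereas your shortcut only replaces its role inside the present lemma. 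One small presentational point: the worry you raise about Proposition~\ref{P:asymptoticvp} being a ``fixed-$n$'' result with possibly degenerating constants is valid in spirit, but the paper's proof of Lemma~\ref{Lem:Minoration} does not rely on that proposition at all, so the obstacle you mention is one the paper already sidesteps, just by different means.
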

\paragraph{Sketch of the proof}
For convenience, first we work with the operator 
$$\gg_{m}(h)=-h^2\frac{1}{\rho}\partial_{\rho}\rho\partial_{\rho}+V_{h}^{m} \ \ 
\mbox{with}  \ \ 
V_{h}^{m}(\rho):=\log(\rho)^2+h^2\frac{m^2}{\rho^2}\, .$$
We notice that in the sense of quadratic form we have $\gg_{m}(h) \geq \gg_{0}(h)$ and $\dom(\gg_{m}(h))\subset \dom(\gg_{0}(h))$, therefore for all $m\in \Z$ there holds $\mu_{m,n}(h) \geq \mu_{0,n}(h)$ and it is sufficient to prove the result for $m=0$. 

We will split the proof depending on which region belongs the parameter $h$: 
\begin{enumerate}
\item For $h\in (0,h_{0})$ with $h_{0}$ to be chosen, we will use the semi-classical analysis and the Agmon estimates on the eigenfunctions in order to compare $\gg_{0}(h)$ with more standard operators. The idea is to bound from below the potential $\log^2\rho$ on a suitable interval by a quadratic potential such that the associated operator has known spectrum.
\item Since $h\to\mu_{0,n}(h)$ is unbounded for large $h$, there exists $h_{\nu_{0}}$ such that for $h \geq h_{\nu_{0}}$ the eigenvalues $\mu_{m,n}(h)$ are outside the region $\{ \mu \leq \nu_{0} \}$.
\item On the compact $[h_{0},h_{\nu_{0}}]$, since $n\to\mu_{0,n}(h)$ is unbounded for large $n$, we may find $N\geq1$ such that for $n \geq N$ the eigenvalues $\mu_{m,n}(h)$ are outside the region $\{ \mu \leq \nu_{0} \}$. Therefore the Lemma is clear on this region since we have to deal with a finite number of eigenvalues.
\end{enumerate}

\paragraph{proof}

Assume $\mu_{m,n}(h) \leq \nu_{0}$. Denote by $0<\rho_{1}<1<\rho_{2}$ the two real numbers (depending on $\nu_{0}$) such that
 $$
 \log^2(\rho_{1})=\log^2(\rho_{2})=\nu_{0} \, .
 $$
 Set $\rho_{1}'\in (0,\rho_{1})$, $\rho_{2}'\in (\rho_{2},+\infty)$ and $I(\nu_{0}):=(\rho_{1}',\rho_{2}')$. Let $M(\nu_{0}):=\min(\gphi_{0}(\rho_{1}'),\gphi_{0}(\rho_{2}'))$ where $\gphi_{0}$ is defined by 
$$\gphi_{0}(\rho):=\left|\int_{1}^{\rho}\sqrt{\left((\log \rho)^2-\nu_{0}\right)_{+}} \rd \rho\right|$$
By construction we have $M(\nu_{0})>0$ and the Agmon estimate \eqref{E:Agmmeg0} provides $h_{0}>0$ such that (uniformly in $n$):
$$
 \forall h\in (0,h_{0}), \quad \int_{\complement I(\nu_{0})} |\gu_{0,n}(\rho,h)|^2 \rho \rd \rho \leq C(\nu_{0},\beta) e^{-\beta M(\nu_{0})/h} 
$$
where $\beta\in (0,1)$ is set. 

Recall that $\tu_{m,n}(\rho,h)=\sqrt{\rho}\, u_{m,n}(\rho,h)$ is a normalized eigenfunction of $\tgg_m(h)=\cM \gg_{m}(h) \cM^{*}$ associated with the eigenvalue $\mu_{m,n}(h)$. It satisfies
\begin{equation}
\label{E:agmonendehorsI}
\forall h\in (0,h_{0}), \quad \int_{\complement I(\nu_{0})} |\tgu_{0,n}(\rho,h)|^2 \rd \rho \leq C(\nu_{0},\beta) e^{-\beta M(\nu_{0})/h} 
 \end{equation}
\begin{rem}\label{Rq:agmonendehorsI}
Since $\gg_{m}(h) \geq \gg_{0}(h)$, in the sense of quadratic form, the above estimate \eqref{E:agmonendehorsI} holds also for $\tgu_{m,n}$:
$$\forall h\in (0,h_{0}), \quad \int_{\complement I(\nu_{0})} |\tgu_{m,n}(\rho,h)|^2  \rd \rho
= \int_{\complement I(\nu_{0})} |\gu_{m,n}(\rho,h)|^2 \rho \rd \rho \leq C(\nu_{0},\beta) e^{-\beta M(\nu_{0})/h}
$$ 
uniformly with respect to $(m,n)$ such that $\mu_{m,n}(h) \leq \nu_{0}$. This estimate will be used in Section \ref{S:prthm3}.
\end{rem}
Set $\epsilon_{0}\in (0,\rho_{1}')$.
Let $(\chi_{j})_{j=1,2}\in \cC^{\infty}(\R_{+},[0,1])$ be a partition of the unity of $\R_{+}$ such that $\chi_{1}^2+\chi_{2}^2=1$ with $\chi_{2}=0$ on $I(\nu_{0})$ and $\chi_{2}=1$ on $(0,\rho_{1}'-\epsilon_{0})\cup (\rho_{2}'+\epsilon_{0},+\infty)$. We may assume that there exists $C>0$ such that $\sum_{j}|\nabla\chi_{j}|^2 \leq C$.

 The IMS formula provides for any eigenfunction $\tgu_{0,n}(\cdot,h)$: 
\begin{align*}
\tgq_{0}^{h}(\tgu_{0,n}(\cdot,h)) & = \sum_{j=1,2} \tgq_{0}^{h}(\chi_{j}\tgu_{0,n}(\cdot,h))- \sum_{j=1,2}\|(\nabla \chi_{j})\tgu_{0,n}(\cdot,h) \|^2_{L^2(\R_{+})} 
\\
& \geq \tgq_{0}^{h}(\chi_{1}\tgu_{0,n}(\cdot,h))-C\int_{\supp(\chi_{2}')}|\tgu_{0,n}(\rho,h)|^2  \rd \rho
\end{align*}
and therefore using \eqref{E:agmonendehorsI}:
\begin{equation}
\label{E:minorationparIMS}
\tgq_{0}^{h}(\tgu_{0,n}(\cdot,h)) \geq \tgq_{0}^{h}(\chi_{1}\tgu_{0,n}(\cdot,h))-C(\nu_{0},\beta) e^{-\beta M(\nu_{0})/h} .
\end{equation}
We now bound from below $\tgq_{0}^{h}(\chi_{1}\tgu_{0,n}(\cdot,h))$ using a lower bound on the potential. We have 
\begin{equation}
\label{E:minorationpotentiel}
\exists C(\nu_{0})\in (0,1), \forall \rho \in J(\nu_{0}), \quad C(\nu_{0}) (\rho-1)^2 \leq \log^2\rho 
\end{equation}
where we have denoted $J(\nu_{0}):=(\rho_{1}'-\epsilon_{0},\rho_{2}'+\epsilon_{0})$. 

Assume $n\neq n'$. Since $\langle \tgu_{0,n}(\cdot,h), \tgu_{0,n'}(\cdot,h)  \rangle_{L^2(\R_{+})}=0$,  we deduce from \eqref{E:agmonendehorsI} that 
\begin{equation}
\label{E:quasiorthogonalitetronc}
\left| \langle \chi_{1}\tgu_{0,n}(\cdot,h), \chi_{1}\tgu_{0,n'}(\cdot,h)  \rangle_{L^2(\R_{+})} \right|  \leq C(\nu_{0},\beta) e^{-\beta M(\nu_{0})/h}.
\end{equation}

Let us introduce the harmonic oscillator
$$\gg^{\rm low}(h):=-h^2 \partial^2_{\rho}+(\rho-1)^2, \quad \rho \in \R$$
initially defined on $\cC^{\infty}_{0}(\R)$ and close on  $L^2(\R)$, whose eigenvalues are $\{(2n-1)h\}_{n \in \N^*}$.
Due to \eqref{E:minorationpotentiel} and since $\supp(\chi_{1})=J(\nu_{0})$ we have 
\begin{multline}
\label{E:minorationoperator}
\tgq_{0}^{h}(\chi_{1}\tgu_{0,n}(\cdot,h)) \geq  C(\nu_{0}) \langle \gg^{\rm low}(h)\chi_{1} \tgu_{0,n}(\cdot,h),\chi_{1}\tgu_{0,n}(\cdot,h)\rangle_{L^2(\R)}
\\
 - \frac{h^2}{4(\rho_{1}'-  \epsilon_{0})^2}
\|\chi_{1}\tgu_{0,n}(\cdot,h)\|^2_{L^2(\R_{+})}
\end{multline}
where in the right hand side, $\chi_1\tgu_{0,n}$, extended by $0$ on $\R_-$, is also considered as a function defined on $\R$. 

 Recall \eqref{E:quasiorthogonalitetronc}, the min-max principle combined with \eqref{E:minorationoperator} provides 
\begin{equation}
\label{E:minorationpartiechi1}
\tgq_{0}^{h}(\chi_{1}\tgu_{0,n}(\cdot,h)) \geq \Big(C(\nu_{0})(2n-1)h-\widetilde{C}(\nu_{0},\beta) h^2\Big)\|\chi_{1}\tgu_{0,n}(\cdot,h)\|^2_{L^2(\R_{+})}. 
\end{equation}

Using \eqref{E:agmonendehorsI} we get $|1-\| \chi_{1}\tgu_{0,n}(\cdot,h)\|^2_{L^2(\R_{+})}| \leq C(\nu_{0},\beta) e^{-\beta M(\nu_{0})/h}$.

\Bk

Therefore combining \eqref{E:minorationparIMS} and \eqref{E:minorationpartiechi1} we have proved the existence of $h_{0}>0$ and $C_{0}>0$ such that for all $(0,n,h)$ such that $\mu_{0,n}(h) \leq \nu_{0}$ we have
$$ \forall h \in (0,h_{0}), \quad \mu_{0,n}(h)= \tgq_{0}^{h}(\tgu_{0,n}(\cdot,h)) \geq C_{0} nh. $$
We now have to deal with the region $h\in (h_{0},+\infty)$. Since $\mu_{0,n}(h)$ tends to $+\infty$ as $h$ tends to $+\infty$, there exists $h_{\nu_{0}}>0$ such that
$$\forall n \in \N^*, \forall h\geq h_{\nu_{0}}, \quad \mu_{0,n}(h) \geq \nu_{0} \, .$$
Therefore we are led to prove the lower bound for $h\in [h_{0},h_{\nu_{0}}]$. Since for all $h>0$ the sequence $(\mu_{m,n}(h))_{n \geq1}$ converges toward $+\infty$, there exists $n(h)$ such that for all $n \geq n(h)$ we have $\mu_{m,n}(h) \geq \nu_{0}$. Due to a compact 	argument we find $N\in \N^*$ such that 
$$\forall n > N, \forall h\in [h_{0},h_{\nu_{0}}], \quad \mu_{0,n}(h) \geq \nu_{0} \, .$$
Define $C(h):=\min_{1 \leq n \leq N} \mu_{0,n}(h)/n$ and $C:=\min_{h\in [h_{0},h_{\nu_{0}}]}\frac{C(h)}{h}$. We clearly have $C>0$ and by construction, for all $(n,h)\in \N^* \times [h_{0},h_{\nu_{0}}]$ such that $\mu_{0,n}(h) \leq \nu_{0}$ we have
$$\mu_{0,n}(h) \geq Cnh$$
 therefore the lemma is proved for $h\in [h_{0},h_{\nu_{0}}]$.
 
\begin{rem}
In \eqref{E:minorationpartiechi1}, the remainder term of order $h^2$ involves the contributions of $\frac{-h^2}{4\rho^2}$ and has been controlled on $J(\nu_{0})$. Another strategy, which improves the remainder term, \Bk would have been to work in the weighted space $L^2_\rho(\R_+)$ and to consider 
$$\gg^{\rm low}(h):=-h^2 \frac{1}{\rho}\partial_{\rho}\rho\partial_{\rho}+(\rho-1)^2, \quad \rho>0.$$
In this case, \eqref{E:minorationpartiechi1} is replaced by
$$\gq_{0}^{h}(\chi_{1}\gu_{0,n}(\cdot,h)) \geq C(\nu_0)(\zeta_{n}(h)-C(\nu_{0},\beta) e^{-\beta M(\nu_{0})/h})\|\chi_{1}\gu_{0,n}(\cdot,h)\|^2_{L^2_{\rho}(\R_{+})} $$
with $\zeta_{n}(h)$ the $n$-th eigenvalue of the operator 
$\gg^{\rm low}(h)$. These eigenvalues have already been studied in \cite[Section 4.2]{Yaf08} and \cite{Pof13-II} and they can be bounded from below by $C_1nh$ by exploiting the results from \cite{Pof13-II}.
\Bk 

\end{rem}

\subsection{Bring the norm of a canonical operator}\label{S:prthm3}\label{ss42}

Let $\lambda>0$, for simplicity we denote by $\mathcal{N}(\lambda):=\mathcal{N}_{\bA,V}(\lambda)$ the number of negative eigenvalues of $H_{\bA}-V$ below $-\lambda$: \Bk
$$\mathcal{N}(\lambda):= \sharp \Big( \spec(H_{\bA}-V) \cap ]-\infty,-\lambda]\Big).$$ 
We want to prove that there exists $C>0$ independent of $\lambda$, such that $\mathcal{N}(\lambda) \leq C$.
Let us introduce the axisymmetric non negative potential 
\bel{D:V0}
V_0(r,z):=  \langle r \rangle^{-\alpha} \, v_\perp(z).
\ee
 The assumption \eqref{hypV} means that $V\leq V_0$. Then the min-max principle gives:
 \bel{eq2}
 \mathcal{N}(\lambda)\leq \mathcal{N}_0(\lambda):= \sharp \Big( \spec(H_{\bA}-V_0) \cap ]-\infty,-\lambda]\Big).
 \ee

According to the Birman-Schwinger principle, for $\lambda>0$, 
\bel{eq3}
\mathcal{N}_0(\lambda) = n_+\Big(1, V_0^{\frac12} (H_{\bA}+\lambda)^{-1} V_0^{\frac12}\Big), 
\ee
where for a self-adjoint operator $T$, $n_+(s,T): = {\rm Tr}\,\one_{(s,\infty)}(T);$ is the counting function of positive eigenvalues of $T$. 

Fix a real number $\nu>0$ (chosen sufficiently small later) and let us introduce the orthogonal projections
$P_\nu :=  {\bf 1}_{[0,\nu]} (H_{\bA})$
and $\overline{P_\nu}:= I - P_\nu= {\bf 1}_{]\nu, + \infty[}(H_{\bA})$. 

Since $H_{\bA} \overline{P_\nu} \geq \nu$, the compact operator $V_0^{\frac12} (H_{\bA}+\lambda)^{-1}  \overline{P_\nu} V_0^{\frac12} $ is uniformly bounded with respect to $\lambda \geq 0$ and from the Weyl inequality, for any $\epsilon >0$, we have:
\bel{eq5}
n_+\Big(1, V_0^{\frac12} (H_{\bA}+\lambda)^{-1} V_0^{\frac12}\Big) \leq n_+\Big(1-\epsilon , V_0^{\frac12} (H_{\bA}+\lambda)^{-1} P_\nu V_0^{\frac12}\Big) + C_\nu, \quad C_\nu \geq 0.
\ee
According to the decomposition:
$$H_{\bA}= \Phi^{*} \cF_{3}^{*}  \left(  \sum^{\bigoplus}_{(m,n) \in \Z\times \N^*} \int_{k\in \R}^{\bigoplus}  \lambda_{m,n}(k) P_{m,n}(k)  \rd k \right) \cF_{3}  \Phi,$$
with $P_{m,n}(k):f\mapsto \langle f, u_{m,n}(\cdot,k)\rangle u_{m,n}(\cdot,k)$,  the orthogonal projection onto $u_{m,n}(.,k) \in L^2(\R_+,  r\rd r)$, we have 
$$V_0^{\frac12} (H_{\bA}+\lambda)^{-1} P_\nu V_0^{\frac12} = V_0^{\frac12}   
\Phi^{*} \cF_{3}^{*}  \left(  \sum^{\bigoplus}_{(m,n) \in \Z\times \N^*}\int_{k\in \R}^{\bigoplus}  P_{m,n}(k)   \frac{{\bf 1}_{[0,\nu]}(\lambda_{m,n}(k)) }{\lambda_{m,n}(k)+\lambda} \rd k \right) \cF_{3}  \Phi V_0^{\frac12} .$$

Since $V_0$ is axisymmetric, this operator is unitarily equivalent to the direct sum of 
$$K_{\nu,m}(\lambda):=V_0^{\frac12}\cF_{3}^{*}  \left(  \int_{k\in \R}^{\bigoplus}  \sum^{\bigoplus}_{n \in \N^*} \tP_{m,n}(k)   \frac{{\bf 1}_{[0,\nu]}(\lambda_{m,n}(k)) }{\lambda_{m,n}(k)+\lambda} \rd k \right) \cF_{3} V_0^{\frac12},$$
defined in 
$L^2(\R_+\times \R, \rd r \rd z)$, with $\tP_{m,n}(k):= \cM^* P_{m,n}(k) \cM = \langle ., \tu_{m,n}(k)\rangle \tu_{m,n}(k,.)$,  the orthogonal projection onto $\tu_{m,n}(.,k) \in L^2(\R_+,  \rd r)$, $\tu_{m,n}(r,k)=\sqrt{r} u_{m,n}(r,k)$.

Let us prove that for some $s \in ]0,1[$, there exists $\nu$ sufficiently small such  for any $m \in \Z$ and any $\lambda>0$
\bel{eq7}
n_+(s, K_{\nu,m}(\lambda))=0.
\ee

Then Theorem \ref{thm3}, is a consequence of  \eqref{eq2}, \eqref{eq3}, \eqref{eq5} and \eqref{eq7}.

Let us introduce the operator:
 $$S_m(\lambda)\; : \; L^2(\R, l^2(\N^*)) \longrightarrow L^2(\R_+\times \R, \rd r \rd z),$$
 defined, for $(g_n(.))_{n\in \N^*} \in  L^2(\R, l^2(\N^*))$ by
\bel{D:Sm}
 S_m(\lambda)(g_n)(r, z):= \frac{V_0^{\frac12}(r,z)}{\sqrt{2\pi}} \sum_{n \in \N^*} \int_{\R} g_n(k) \frac{e^{izk}{\bf 1}_{[0,\nu]}(\lambda_{m,n}(k)) }{(\lambda_{m,n}(k)+\lambda)^\frac12} {\tu_{m,n}(r,k)}\rd k,
 \ee
and its adjoint defined for $f \in L^2(\R_+\times \R, \rd r\rd z)$, by
$$S_m(\lambda)^*(f)_n(k)= \frac{1}{\sqrt{2\pi}} \frac{{\bf 1}_{[0,\nu]}(\lambda_{m,n}(k))}{(\lambda_{m,n}(k)+\lambda)^\frac12} \int_{\R_+\times \R}e^{-izk} \overline{\tu_{m,n}(r,k)}(V_0^{\frac12} f)(r,z)\rd r \rd z.$$
We have:
$$ K_{\nu,m}(\lambda)= S_m(\lambda)\, S_m(\lambda)^*,$$
and since 
\bel{eq8}
n_+(s, K_{\nu,m}(\lambda))= n_+(s,   S_m(\lambda)\, S_m(\lambda)^*    )= n_+(s,   S_m(\lambda)^*\, S_m(\lambda)   ),
\ee
 we have to prove that for $\nu$ sufficiently small, the $L^2-$norm of $S_m(\lambda)^*\, S_m(\lambda)$ admits an upper bound by $s<1$ uniformly with respect to $m \in \Z$ and $\lambda>0$. 
 
 \subsection{Computations on the integral kernel of the canonical operator}\label{ss43}
 \begin{prop}
 \label{P:controlenormeHS}
Let $V_{0}$ defined in \eqref{D:V0} and $S_{m}(\lambda)$ defined in \eqref{D:Sm}. Then 
there exist $C>0$ and $\nu_{0}>0$ such that for all $\nu\in (0,\nu_{0})$, the following upper bound of the Hilbert-Schmidt norm holds:
\begin{equation}
\label{M:symetriqueiota}
\|S_{m}(\lambda)^{*}S_{m}(\lambda)\|_2^2 \leq
C\sum_{n,n'}\int_{k}\int_{k'} \iota_{m,n'}(k',\nu) \iota_{m,n}(k,\nu)|\widehat{v_{\perp}}(k'-k) |^2\rd k' \rd k
\end{equation}
 where we have set 
$$\iota_{m,n}(k,\nu):=\frac{{\bf 1}_{[0,\nu]}(\lambda_{m,n}(k))}{\lambda_{m,n}(k)+\lambda} e^{-\alpha k} \, .$$

 \end{prop}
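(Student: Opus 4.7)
The plan is to compute the integral kernel of the operator $S_{m}(\lambda)^{*}S_{m}(\lambda)$ acting on $L^{2}(\R,\ell^{2}(\N^{*}))$, and then apply Cauchy-Schwarz together with uniform Agmon localization of the $\tu_{m,n}(\cdot,k)$ near $r=e^{k}$.

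First I would use the definitions of $S_{m}(\lambda)$ and $S_{m}(\lambda)^{*}$ to write the kernel $K^{m,\lambda}_{n',n}(k',k)$ of $S_{m}(\lambda)^{*}S_{m}(\lambda)$ explicitly. Since $V_{0}(r,z)=\langle r\rangle^{-\alpha}v_{\perp}(z)$ factorizes, the $z$-integration produces the Fourier transform of $v_{\perp}$ and we obtain
\begin{equation*}
K^{m,\lambda}_{n',n}(k',k)=C\,\frac{\one_{[0,\nu]}(\lambda_{m,n'}(k'))}{\sqrt{\lambda_{m,n'}(k')+\lambda}}\frac{\one_{[0,\nu]}(\lambda_{m,n}(k))}{\sqrt{\lambda_{m,n}(k)+\lambda}}\,\widehat{v_{\perp}}(k'-k)\,A^{m}_{n',n}(k',k),
\end{equation*}
where $A^{m}_{n',n}(k',k):=\int_{0}^{\infty}\langle r\rangle^{-\alpha}\,\overline{\tu_{m,n'}(r,k')}\,\tu_{m,n}(r,k)\,\rd r$. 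Since the Hilbert-Schmidt norm is the $L^{2}$-norm of the kernel, the target inequality \eqref{M:symetriqueiota} will follow once we establish the pointwise bound $|A^{m}_{n',n}(k',k)|^{2}\leq C\,e^{-\alpha k}e^{-\alpha k'}$ uniformly in $(m,n,n')$ with $\lambda_{m,n}(k),\lambda_{m,n'}(k')\leq\nu_{0}$.

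By Cauchy-Schwarz,
\begin{equation*}
|A^{m}_{n',n}(k',k)|^{2}\leq\Big(\int_{0}^{\infty}\langle r\rangle^{-\alpha}|\tu_{m,n'}(r,k')|^{2}\rd r\Big)\Big(\int_{0}^{\infty}\langle r\rangle^{-\alpha}|\tu_{m,n}(r,k)|^{2}\rd r\Big),
\end{equation*}
so it suffices to bound each factor by $C\,e^{-\alpha k}$. Following the strategy of Section \ref{sSecEffective}, I would split the integration along $I_{k}=[e^{k}-a(k),e^{k}+a(k)]$ and its complement, with $a(k)$ chosen as in \eqref{E:choixa(k)}. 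On $I_{k}$ one has $\langle r\rangle^{-\alpha}\leq(1+o(1))e^{-\alpha k}$, and since $\|\tu_{m,n}(\cdot,k)\|_{L^{2}(\R_{+})}=1$ this contributes $O(e^{-\alpha k})$. On the complement, one applies the \emph{uniform} Agmon estimate \eqref{E:agmonnormeL2} (valid uniformly in $(m,n)$ with $\lambda_{m,n}(k)\leq\nu_{0}$, cf. Remark \ref{Rq:agmonendehorsI}), which gives a factor $e^{-\beta M(\nu_{0})/h}$ with $h=e^{-k}$, hence a contribution that is exponentially smaller than $e^{-\alpha k}$.

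Assembling the pieces, integrating $|K^{m,\lambda}_{n',n}(k',k)|^{2}$ over $(k,k')$ and summing over $(n,n')$ produces exactly the right-hand side of \eqref{M:symetriqueiota} with the factor $\iota_{m,n}(k,\nu)\iota_{m,n'}(k',\nu)$ coming from the combination of the resolvent weights $(\lambda_{m,n}(k)+\lambda)^{-1}$ and the $e^{-\alpha k}$ bound on the $r$-integral. The main obstacle is the uniform-in-$(m,n)$ control of $\int\langle r\rangle^{-\alpha}|\tu_{m,n}(r,k)|^{2}\rd r$; the delicate point is that the Agmon estimates must be applied in the regime $\lambda_{m,n}(k)\leq\nu_{0}$, ensuring the constants $C(0,\beta)$ appearing in \eqref{E:agmonnormeL2} are truly independent of $(m,n)$ and that the choice of $a(k)$ simultaneously absorbs the polynomial weight $\langle r\rangle^{-\alpha}$ on $I_{k}$ and keeps the Agmon tail negligible.
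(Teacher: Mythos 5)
Your proposal is correct and follows essentially the same route as the paper: compute the integral kernel of $S_{m}(\lambda)^{*}S_{m}(\lambda)$, extract $\widehat{v_{\perp}}(k'-k)$ from the $z$-integration, apply Cauchy-Schwarz to the $r$-overlap $A^{m}_{n',n}(k',k)$, and show $\int_{\R_{+}}\langle r\rangle^{-\alpha}|\tu_{m,n}(r,k)|^{2}\rd r=O(e^{-\alpha k})$ uniformly via Agmon localization near $r=e^{k}$. One small inconsistency: you split at $I_{k}=[e^{k}-a(k),e^{k}+a(k)]$ with $a(k)$ as in \eqref{E:choixa(k)}, but then quote the Agmon tail $e^{-\beta M(\nu_{0})/h}$, which corresponds to the paper's fixed-ratio interval $I_{k}(\nu_{0})=[\rho'_{1}e^{k},\rho'_{2}e^{k}]$ from Remark \ref{Rq:agmonendehorsI}; with your $a(k)$ the correct exponent is $\tfrac{\beta}{2}a(k)^{2}e^{-k}$, which still dominates $\alpha k$, so the conclusion stands. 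The paper's fixed-ratio interval is a bit cleaner here since it yields the uniform-in-$(m,n)$ bound directly from Remark \ref{Rq:agmonendehorsI} without the extra $a(k)$ bookkeeping.
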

 \begin{proof}
  We check that $S_{m}(\lambda)^{*}S_{m}(\lambda): L^2(\R,l^2(\N^*))\longrightarrow L^2(\R,l^2(\N^*))$ corresponds with
 \begin{multline}
 \left(S_{m}(\lambda)^{*}S_{m}(\lambda)(g_{n'})\right)_{n} (k) =
 \\
\frac{1}{2\pi}  L_{m,n}(k)\int_{z}\int_{r}\overline{\tu_{m,n}(r,k)}V_{0}(r,z)\sum_{n'}\int_{k'}g_{n'}(k')L_{m,n'}(k')\tu_{m,n'}(r,k')e^{iz(k'-k)}\rd k' \rd r \rd z
 \end{multline}
 where we have denoted 
$$L_{m,n}(k):=\frac{{\bf1}_{[0,\nu]}(\lambda_{m,n}(k))}{\sqrt{\lambda_{m,n}(k)+\lambda}} \, .$$
The integral kernel of this operator is 
\begin{align*}
\gN_{m,n,n'}(k,k')&:=
L_{m,n}(k)L_{m,n'}(k')\int_{r}\int_{z}V_{0}(r,z)\overline{\tu_{m,n}(r,k)}\tu_{m,n'}(r,k')e^{iz(k-k')}\rd z \rd r \, .
\\
&=L_{m,n}(k)L_{m,n'}(k')
\widehat{v_{\perp}}(k'-k)
\int_{r}\langle r \rangle^{-\alpha}\overline{\tu_{m,n}(r,k)}\tu_{m,n'}(r,k')\rd r.
\end{align*}

Then the Hilbert-Schmidt norm is given by 
\begin{multline}
\label{E:normHSexplicit}
4\pi^2 \|S_{m}(\lambda)^{*}S_{m}(\lambda)\|_2^2=
\\
\sum_{n,n'} \int_k \int_{k'} \int_r 
L_{m,n}(k)^2L_{m,n'}(k')^2
|\widehat{v_{\perp}}(k'-k)|^2 \left| \int_{r}\langle r \rangle^{-\alpha}\overline{\tu_{m,n}(r,k)}\tu_{m,n'}(r,k')\rd r\right|^2 \rd k \rd k'.
\end{multline}
Set $\nu_{0}>0$ and $(m,n,k)$ such that $\lambda_{m,n}(k) \leq \nu_0$. Applying Remark \ref{Rq:agmonendehorsI} 
 we know that there exists $I_k(\nu_{0}):=[\rho'_1 e^k, \rho'_2 e^k]$, $\rho'_1 < 1 < \rho'_2$, such that for any $k \geq k_0$ sufficiently large (independent of $(m,n)$),
$$\int_{\complement I_{k}(\nu_{0})} \!\!\!\langle r \rangle^{-\alpha}
\mid \tu_{m,n}(k,r) \mid^2dr  \leq \int_{\complement I_{k}(\nu_0)} \!\!\!
\mid \tu_{m,n}(k,r) \mid^2dr\leq C(\nu_{0},\beta) e^{-\beta M(\nu_0)e^k}
$$ 
with $\beta \in (0,1)$ and  $M(\nu_{0})>0$. 
On the other hand, on $I_k(\nu_{0})$, we have
$$\int_{ I_{k}(\nu_{0})} \!\!\!\langle r \rangle^{-\alpha}
\mid \tu_{m,n}(k,r) \mid^2dr  \leq C(\nu_{0})e^{-\alpha k} \int_{ I_{k}(\nu_{0})} \!\!\!
\mid \tu_{m,n}(k,r) \mid^2dr\leq  C(\nu_{0})e^{-\alpha k}.
$$ 
Consequently, 
\Bk 
\bel{Esti1}
\int_{\R_+} \!\!\langle r \rangle^{-\alpha}
\mid \tu_{m,n}(k,r) \mid^2dr = O(e^{-\alpha k}),
\ee
uniformly with respect to $(m,n,k) \in \Z \times\N^*\times\R$ satisfying $\lambda_{m,n}(k)\leq \nu_{0}$. Using the Cauchy-Schwarz inequality we deduce from \eqref{E:normHSexplicit} that  for all $\nu\in (0,\nu_{0})$:
\begin{equation}
\label{M:symetriqueiotabis}
\|S_{m}(\lambda)^{*}S_{m}(\lambda)\|_2^2 \leq
C\sum_{n,n'}\int_{k}\int_{k'} \iota_{m,n'}(k',\nu) \iota_{m,n}(k,\nu)|\widehat{v_{\perp}}(k'-k)|^2 \rd k' \rd k
\end{equation}
and the lemma is proved
\end{proof}
We notice that the influence of $V$ appears as an interaction between the behaviors in $r$ and $z$ via a convolution product in the phase space. We now estimate the norm of the function $\iota_{m,n}(k,\nu)$:
\begin{lem}
\label{L:normedeiota}
There exists $C>0$ and $\nu_{0}>0$ such that for all $(m,n,k)\in \Z\times \N^*\times \R$, we have
$$\forall \nu\in (0,\nu_{0}), \forall q\geq1, \quad \|\iota_{m,n}(\cdot,\nu)\|_{L^q} \leq C \frac{\nu^{\alpha-1}}{n^{\alpha}} \, .$$
\end{lem}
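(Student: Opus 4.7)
The plan is to obtain matching $L^1$ and $L^\infty$ bounds for $\iota_{m,n}(\cdot,\nu)$, both of the claimed order $\nu^{\alpha-1}/n^\alpha$, and then interpolate to cover every exponent $q\geq 1$ with the same constant. The essential input is the uniform lower bound of Lemma \ref{Lem:Minoration}; the hypothesis $\alpha>1$ coming from Theorem \ref{thm3} is what will make the relevant majorant both decreasing in $k$ and integrable on the support of $\iota_{m,n}$.

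First I would identify the support. Since $k\mapsto\lambda_{m,n}(k)$ decreases continuously from $+\infty$ to $0$, the indicator ${\bf 1}_{[0,\nu]}(\lambda_{m,n}(k))$ equals $1$ exactly on a half-line $[k_{m,n,\nu},+\infty)$, where $\lambda_{m,n}(k_{m,n,\nu})=\nu$. Picking $\nu_{0}$ as in Lemma \ref{Lem:Minoration} and restricting to $\nu\in(0,\nu_{0})$, the lower bound $\lambda_{m,n}(k)\geq C_{0}n\,e^{-k}$ holds throughout this half-line and, evaluated at $k_{m,n,\nu}$, yields $e^{-k_{m,n,\nu}}\leq \nu/(C_{0}n)$.

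Next I would bound the integrand pointwise. Dropping the nonnegative $\lambda$ in the denominator and invoking $\lambda_{m,n}(k)\geq C_{0}ne^{-k}$ once more, we have on the support
\[
\iota_{m,n}(k,\nu) \leq \frac{e^{-\alpha k}}{C_{0}n\,e^{-k}} = \frac{e^{-(\alpha-1)k}}{C_{0}n}.
\]
Because $\alpha>1$, this majorant decreases in $k$, so the $L^\infty$ norm is attained at $k=k_{m,n,\nu}$ and the integral over $[k_{m,n,\nu},+\infty)$ converges. Plugging $e^{-k_{m,n,\nu}}\leq \nu/(C_{0}n)$ into both evaluations produces
\[
\|\iota_{m,n}(\cdot,\nu)\|_{L^\infty}\leq \frac{\nu^{\alpha-1}}{C_{0}^{\alpha}n^{\alpha}},\qquad \|\iota_{m,n}(\cdot,\nu)\|_{L^1}\leq \frac{\nu^{\alpha-1}}{C_{0}^{\alpha}(\alpha-1)n^{\alpha}}.
\]

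Finally, the Lyapunov (log-convexity) interpolation $\|f\|_{L^q}\leq \|f\|_{L^1}^{1/q}\|f\|_{L^\infty}^{1-1/q}$ turns this pair of endpoint estimates into a bound of the claimed form, with a constant independent of $q\geq 1$. I do not anticipate any real obstacle: once Lemma \ref{Lem:Minoration} is available, the whole argument reduces to elementary monotonicity and a single exponential integral, and uniformity in $m$ comes for free because the constant $C_{0}$ of Lemma \ref{Lem:Minoration} does not depend on $m$.
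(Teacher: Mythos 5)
Your proposal is correct and rests on exactly the same key input as the paper: the uniform lower bound $\lambda_{m,n}(k)\geq C_{0}\,n\,e^{-k}$ from Lemma~\ref{Lem:Minoration} is used both to confine the support of $\iota_{m,n}(\cdot,\nu)$ to a half-line with left endpoint $\gtrsim \log(C_{0}n/\nu)$ and to majorize the integrand by $e^{-(\alpha-1)k}/(C_{0}n)$. The only difference is in the finalization: the paper computes $\|\iota_{m,n}\|_{L^q}^q$ directly by bounding $(\lambda_{m,n}(k)+\lambda)^{-q}\le (C_0 n)^{-q}e^{qk}$ and integrating the resulting exponential, then takes the $q$-th root (which requires the small observation that $(q(\alpha-1))^{-1/q}$ stays bounded for $q\ge1$), whereas you establish the two endpoint bounds in $L^1$ and $L^\infty$ and invoke log-convexity interpolation, which makes the uniformity in $q$ immediate. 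This is a valid and arguably cleaner variant of the same argument; no substantive gap.
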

\begin{proof}
Set $\nu_{0}>0$ and assume $\lambda_{m,n}(k) \leq \nu_{0}$.  According  to Lemma \ref{Lem:Minoration} there exists $C_0>0$ such that 
\bel{eq13}
\lambda_{m,n}(k) \geq C_0 n e^{-k},
\ee 
uniformly with respect to $(m,n,k) \in \Z\times\N^*\times\R$. Then for $\nu\in (0,\nu_{0})$ there holds 
${\bf 1}_{[0,\nu]}(\lambda_{m,n}(k))\leq {\bf 1}_{[0,\frac{\nu}{C_0}]}(n e^{-k})$ and  for any $\lambda>0$ we have
\begin{align*}
\| \iota_{m,n} \|_{L^q}^q =\int_{k} \frac{{\bf 1}_{[0,\nu]}(\lambda_{m,n}(k))}{(\lambda_{m,n}(k)+\lambda)^q} e^{-\alpha q k } \rd k
&
 \leq \int_{k \geq \log\frac{C_{0}n}{\nu}}\frac{1}{(\lambda_{m,n}(k)+\lambda)^q} e^{-\alpha q k } \rd k 
\\
& \leq \frac{1}{(C_{0}n)^q}\int_{k \geq \log\frac{C_{0}n}{\nu}} e^{(-\alpha+1) q k } \rd k
\\
& =\frac{1}{q(\alpha-1)(C_{0}n)^q} \left(\frac{\nu}{C_{0}n}\right)^{(\alpha-1)q}
\end{align*}
and the lemma is proved.
\end{proof}
\subsection{Convergence of the series and proof of Theorem \ref{thm3}}\label{ss44}
We notice that the r.h.s of \eqref{M:symetriqueiota} coincides with
$$ \sum_{n,n'}\int_{k} \iota_{m,n}(k,\nu) (\iota_{m,n'}(\cdot,\nu)\ast | \widehat{v_{\perp}}|^2) (k) \rd k \, . $$
Assume that $v_{\perp}\in L^p$ with $p \in [1,2]$. Then $| \widehat{v_{\perp}}|^2 \in L^{p'/2}$ with $p'=\frac{p}{p-1}\geq 2$. 
Young's inequality provides for all $q\geq1$:
$$\|\iota_{m,n'}\ast | \widehat{v_{\perp}}|^2 \|_{L^r} \leq  \|\iota_{m,n'} \|_{L^q} \| v_{\perp} \|_{L^p}$$ 
where $\frac{2}{p'}+\frac{1}{q}=1+\frac{1}{r}$. We now use Holder's inequality combined with lemma \ref{L:normedeiota} and we get for all $(m,n,n')$:
$$\forall \nu \in (0,\nu_{0}), \quad \int_{k} \iota_{m,n}(k,\nu) (\iota_{m,n'}(\cdot,\nu)\ast | \widehat{v_{\perp}}|^2) (k) \rd k \leq C  \| v_{\perp} \|_{L^p}\frac{\nu^{2\alpha-2}}{n^{\alpha}n'^{\alpha}}$$
Since $\alpha>1$, we get 
$$\sum_{n,n'}\int_{k} \iota_{m,n}(k,\nu) (\iota_{m,n'}(\cdot,\nu)\ast | \widehat{v_{\perp}}|^2) (k) \rd k=O(\nu^{2\alpha-2})  \| v_{\perp} \|_{L^p}\sum_{n\geq1}\frac{1}{n^{\alpha}}\sum_{n'\geq1}\frac{1}{(n')^{\alpha}}$$
and therefore using Proposition \ref{P:controlenormeHS}:
\bel{}
\|S_{m}(\lambda)^{*}S_{m}(\lambda)\|_2^2 =O(\nu^{2\alpha-2})
\ee
which, for $\alpha>1$, tends to $0$ with $\nu$,  uniformly with respect to $(m,\lambda) \in \Z\times(0,+ \infty)$. Then, \eqref{eq7} follows from \eqref{eq8}. In conclusion the hypotheses we have used on $V(r,z)$ is $V(r,z) \leq \langle r \rangle^{-\alpha}v_{\perp}(z)$ with $\alpha>1$ and $v_{\perp}\in L^p(\R)$, $p\in [1,2]$ and we deduce Theorem \ref{thm3}.

\bibliographystyle{mnachrn}

\begin{thebibliography}{10}

\bibitem{Ag85}
{\sc S.~Agmon}.
\newblock {\em Bounds on exponential decay of eigenfunctions of {S}chr\"odinger
  operators}, volume 1159 of {\em Lecture Notes in Math.}
\newblock Springer, Berlin 1985.

\bibitem{AvrHerSi78}
{\sc J.~Avron, I.~Herbst, B.~Simon}.
\newblock Schr\"odinger operators with magnetic fields. {I}. {G}eneral
  interactions.
\newblock {\em Duke Math. J.} {\bf 45}(4) (1978) 847--883.

\bibitem{BriKovRaiSoc09}
{\sc P.~Briet, H.~Kova{\v{r}}{\'{\i}}k, G.~Raikov, E.~Soccorsi}.
\newblock Eigenvalue asymptotics in a twisted waveguide.
\newblock {\em Comm. Partial Differential Equations} {\bf 34}(7-9) (2009)
  818--836.

\bibitem{BriRaiSoc08}
{\sc P.~Briet, G.~Raikov, E.~Soccorsi}.
\newblock Spectral properties of a magnetic quantum {H}amiltonian on a strip.
\newblock {\em Asymptot. Anal.} {\bf 58}(3) (2008) 127--155.

\bibitem{BruMirRai11}
{\sc V.~Bruneau, P.~Miranda, G.~Raikov}.
\newblock Discrete spectrum of quantum {H}all effect {H}amiltonians {I}.
  {M}onotone edge potentials.
\newblock {\em J. Spectr. Theory} {\bf 1}(3) (2011) 237--272.

\bibitem{BruMirRaik13}
{\sc V.~Bruneau, P.~Miranda, G.~Raikov}.
\newblock Dirichlet and neumann eigenvalues for half-plane magnetic
  hamiltonians.
\newblock {\em Submitted}  (2013).

\bibitem{DiSj99}
{\sc M.~Dimassi, J.~Sj{\"o}strand}.
\newblock {\em Spectral asymptotics in the semi-classical limit}, volume 268 of
  {\em London Mathematical Society Lecture Note Series}.
\newblock Cambridge University Press, Cambridge 1999.

\bibitem{GeilSen97}
{\sc V.~A. Ge{\u\i}ler, M.~M. Senatorov}.
\newblock The structure of the spectrum of the {S}chr\"odinger operator with a
  magnetic field in a strip, and finite-gap potentials.
\newblock {\em Mat. Sb.} {\bf 188}(5) (1997) 21--32.

\bibitem{GeNi98}
{\sc C.~G{\'e}rard, F.~Nier}.
\newblock The {M}ourre theory for analytically fibered operators.
\newblock {\em J. Funct. Anal.} {\bf 152}(1) (1998) 202--219.

\bibitem{Gru73}
{\sc V.~V. Gru{\v{s}}in}.
\newblock Hypoelliptic differential equations and pseudodifferential operators
  with operator-valued symbols.
\newblock {\em Mat. Sb. (N.S.)} {\bf 88(130)} (1972) 504--521.

\bibitem{He88}
{\sc B.~Helffer}.
\newblock {\em Semi-classical analysis for the {S}chr\"odinger operator and
  applications}, volume 1336 of {\em Lecture Notes in Mathematics}.
\newblock Springer-Verlag, Berlin 1988.

\bibitem{HeSj85}
{\sc B.~Helffer, J.~Sj{\"o}strand}.
\newblock Puits multiples en limite semi-classique. {II}. {I}nteraction
  mol\'eculaire. {S}ym\'etries. {P}erturbation.
\newblock {\em Ann. Inst. H. Poincar\'e Phys. Th\'eor.} {\bf 42}(2) (1985)
  127--212.

\bibitem{HisPopSoc13}
{\sc P.~D. Hislop, N.~Popoff, E.~Soccorsi}.
\newblock Characterization of currents carried by bulk states in one-edge quantum hall systems.
\newblock {\em Ongoing work}  (2013).
\bibitem{HisSoc08I}
{\sc P.~D. Hislop, E.~Soccorsi}.
\newblock Edge currents for quantum {H}all systems. {I}. {O}ne-edge, unbounded
  geometries.
\newblock {\em Rev. Math. Phys.} {\bf 20}(1) (2008) 71--115.

\bibitem{HisSoc13}
{\sc P.~D. Hislop, E.~Soccorsi}.
\newblock Spectral analysis of {I}watsuka ``snake" hamiltonians.
\newblock {\em Submitted}  (2013).

\bibitem{Melina++}
{\sc D.~Martin}.
\newblock M\'elina, biblioth\`eque de calculs \'el\'ements finis.
\newblock {\em http\char58//anum-maths.univ-rennes1.fr/melina}  (2010).

\bibitem{Pof13-II}
{\sc N.~Popoff}.
\newblock On the lowest energy of a 3d magnetic laplacian with axisymmetric
  potential.
\newblock {\em Preprint IRMAR}  (2013).

\bibitem{Rai90}
{\sc G.~D. Ra{\u\i}kov}.
\newblock Eigenvalue asymptotics for the {S}chr\"odinger operator with
  homogeneous magnetic potential and decreasing electric potential. {I}.
  {B}ehaviour near the essential spectrum tips.
\newblock {\em Comm. Partial Differential Equations} {\bf 15}(3) (1990)
  407--434.

\bibitem{Rai92}
{\sc G.~D. Ra{\u\i}kov}.
\newblock Eigenvalue asymptotics for the {S}chr\"odinger operator with
  perturbed periodic potential.
\newblock {\em Invent. Math.} {\bf 110}(1) (1992) 75--93.

\bibitem{RaiWar02}
{\sc G.~D. Raikov, S.~Warzel}.
\newblock Quasi-classical versus non-classical spectral asymptotics for
  magnetic {S}chr\"odinger operators with decreasing electric potentials.
\newblock {\em Rev. Math. Phys.} {\bf 14}(10) (2002) 1051--1072.

\bibitem{ReSi78}
{\sc M.~Reed, B.~Simon}.
\newblock {\em Methods of modern mathematical physics. {IV}. {A}nalysis of
  operators}.
\newblock Academic Press [Harcourt Brace Jovanovich Publishers], New York 1978.

\bibitem{Si82}
{\sc B.~Simon}.
\newblock Semiclassical analysis of low lying eigenvalues. {I}. {N}ondegenerate
  minima: asymptotic expansions.
\newblock {\em Ann. Inst. H. Poincar\'e Sect. A (N.S.)} {\bf 38}(3) (1983)
  295--308.

\bibitem{Sob84}
{\sc A.~V. Sobolev}.
\newblock Asymptotic behavior of energy levels of a quantum particle in a
  homogeneous magnetic field perturbed by an attenuating electric field. {I}.
\newblock In {\em Linear and nonlinear partial differential equations.
  {S}pectral asymptotic behavior}, volume~9 of {\em Probl. Mat. Anal.}, pages
  67--84. Leningrad. Univ., Leningrad 1984.

\bibitem{Yaf03}
{\sc D.~Yafaev}.
\newblock A particle in a magnetic field of an infinite rectilinear current.
\newblock {\em Math. Phys. Anal. Geom.} {\bf 6}(3) (2003) 219--230.

\bibitem{Yaf08}
{\sc D.~Yafaev}.
\newblock On spectral properties of translationally invariant magnetic
  {S}chr\"odinger operators.
\newblock {\em Ann. Henri Poincar\'e} {\bf 9}(1) (2008) 181--207.

\end{thebibliography}

%
%

\end{document}